\renewcommand{\c}[1]{\mathcal{#1}}
\renewcommand{\bf}[1]{\textup{\textbf{#1}}}
\newcommand{\bb}[1]{\mathbb{#1}}
\renewcommand{\rm}[1]{\textrm{#1}}
\renewcommand{\sf}[1]{\textup{\textsf{#1}}}
\newcommand{\inn}[1]{\langle #1\rangle}
\newcommand{\deq}{\mathrel{\mathop:}=}
\renewcommand{\models}{\mathrel{{|}\kern-0.47ex{=}\kern-1.725ex{=}}}
\newcommand{\nmodels}{\mathrel{{|}\kern-0.47ex{=}\kern-1.725ex{\neq}}}
\renewcommand{\le}{\leqslant}
\renewcommand{\ge}{\geqslant}
\renewcommand{\b}{\breve{\ }}
\newcommand{\m}{\wedge}
\renewcommand{\j}{\vee}
\newcommand{\pw}{\raise0.545ex\hbox{{\scalebox{1.0925}[1.0925]{\(\wp\)}}}}
\newcommand{\id}{\mathrm{id}}
\DeclareMathOperator{\At}{\bf{At}}
\DeclareMathOperator{\at}{\textup{At}}
\DeclareMathOperator{\pru}{\mathrm{Pr}_\mathrm{U}}
\DeclareMathOperator{\prl}{\mathrm{Pr}_\mathrm{L}}
\tikzstyle arrowstyle=[scale=1]
\tikzstyle directed=[postaction={decorate,decoration={markings,
    mark=at position 0.535 with {\arrow[arrowstyle]{to}}}}]
\newcommand{\leqnomode}{\tagsleft@true\let\veqno\@@leqno}
\newcommand{\reqnomode}{\tagsleft@false\let\veqno\@@eqno}
\let\thm@indent\indent}{\let\thm@indent\noindent}%
  {}{}
\patchcmd{\@begintheorem}{\scshape}{}{}{}
\patchcmd{\@opargbegintheorem}{\scshape}{}{}{}
\newtheorem{theorem}{Theorem}
\newtheorem{lemma}[theorem]{Lemma}
\newtheorem{corollary}[theorem]{Corollary}
\newtheorem{proposition}[theorem]{Proposition}
\newtheorem{definition}[theorem]{Definition}
\newtheorem{problem}{Problem}
\begin{document}

\begin{center}
\Large{\textbf{Finite Relation Algebras}}\\

James M. Koussas\\

 La Trobe University\\
 
 jameskoussas@gmail.com

\end{center}

\section{Abstract}

We will show that almost all nonassociative relation algebras are symmetric and integral (in the sense that the fraction of both labelled and unlabelled structures that are symmetric and integral tends to \(1\)), and using a Fra{\"i}ss{\'e} limit, we will establish that the classes of all atom structures of nonassociative relation algebras and relation algebras both have \(0\)--\(1\) laws. As a consequence, we obtain improved asymptotic formulas for the numbers of these structures and broaden some known probabilistic results on relation algebras.

\section{Introduction}

The calculus of relations is a branch of logic that was developed in the nineteenth century, largely due to the work of Augustus De Morgan, Charles Peirce, and Ernst Schr{\"o}der; see \cite {morg}, \cite{peirce}, and \cite{schroder}, for example. By the beginning of twentieth century, the calculus of relations was considered to be an important branch of logic. Indeed, in \cite{russell}, Bertrand Russell stated that ``the subject of symbolic logic is formed by three parts: the calculus of
propositions, the calculus of classes, and the calculus of relations.'' The calculus of relations even played a role in the birth of model theory. Indeed, Leopold L{\"o}wenheim stated and proved the earliest known version of the L{\"o}wenheim-Skolem Theorem as a result on the calculus of relations; for more details, see \cite{bad}. Interest in the field mostly faded until the publication of \cite{og}, where Alfred Tarski defined an abstract algebraic counterpart to the calculus of relations, namely relation algebras. Tarski and many of his students took an interest in these algebras, which~lead to relation algebras becoming a fairly popular area of research that is still active.

The idea of defining the probability of a property holding in a class of finite structures as a limit is due to Rudolf Carnap (see \cite{carnap}), but similar ideas appeared earlier. In~\cite{mtprob}~and~\cite{mtnumb}, Ronald Fagin looked at the probability of certain sentences holding in the classes of all relational structures of a given type as well as asymptotic formulas for the numbers of these structures. The study of \(0\)--\(1\) laws was initiated by  Glebski\u{\i}, Kogan, Liogon'ki\u{\i}, and Talanov in the nineteen seventies; see \cite{og1} and \cite{og2}. The study of conditions that guarantee the existence of \(0\)--\(1\) laws for first-order and monadic second-order properties were studied extensively by Kevin Compton in \cite{comp1} and \cite{comp2}. In the context of relation algebras, there has been surprisingly little research published on probabilistic results of this nature.  In~\cite{fira},  Roger Maddux finds an asymptotic formula for the number of nonassociative relation algebras in which the identity is an atom and the number of integral relation algebras, and shows that almost all of these algebras are rigid and satisfy any finite set of equations that hold in all representable relation algebras. In the present article, we show that almost all finite nonassociative relation algebras are symmetric and have \(e\) as an atom, and establish a \(0\)--\(1\) law for the class of all atom structures of finite  nonassociative relation algebras. Combining these results with the results of Maddux, we obtain a simple asymptotic formula for both the number of nonassociative relation algebras and the number of relation algebras, and show that almost all finite nonassociative relation algebras are integral relation algebras.

\section{Preliminaries}

We begin by giving a formal definition of labelled and unlabelled probabilities of properties. We will mostly follow the approach taken in \cite{2kinds}. Note that by \(\bb{N}\) we mean \(\{1,2,\dots\}\).

\begin{definition}[Probabilities]
Let \(\c{K}\) be a class of finite structures of a finite signature \(F\) that is closed under isomorphism and has no upper bound on the size of its members.  For all \(n \in \bb{N}\), let \(\c{U}_n\) be a set with precisely one representative from each isomorphism class of \(n\)-element structures from \(\c{K}\) and let \(\c{L}_n\) be the set of all structures in \(\c{K}\) with universe~\(\{1,\dots,n\}\). Let \(P\) be some property of \(F\)-structures that is invariant under isomorphisms (for example, a~first-order property). Let \(s \colon \bb{N} \to \bb{N}\) be the increasing sequence of values of \(n\) with \(\c{U}_n \neq \varnothing\). If the limit
\[
\lim_{n \to \infty} \frac{|\{ \bf{A} \in \c{U}_{s(n)} \mid \bf{A} \models P\}|}{|\c{U}_{s(n)}|}
\]
exists, we call it the \emph{unlabelled probability of \(P\)} and denote it by \(\pru(P,\c{K})\). If the limit
\[
\lim_{n \to \infty} \frac{|\{ \bf{A} \in \c{L}_{s(n)} \mid \bf{A} \models P\}|}{|\c{L}_{s(n)}|}
\]
exists, we call it the \emph{labelled probability of \(P\)} and denote it by \(\prl(P,\c{K})\). If \(\pru(P,\c{K}) = 1\), we say that \emph{almost all structures in \(\c{K}\) satisfy \(P\)}.
\end{definition}

We will mostly work with classes where there are elements of every possible cardinality, so the sequence \(s\) will be the identity sequence. 

The result we will need from Freese \cite{2kinds} is stated below; this result is stated for algebras, but the proof also works for relational structures.  Similar results appear in earlier articles, such as Fagin \cite{mtnumb}.

\begin{proposition}\label{rigid}
Let \(\c{K}\) be a class of similar finite structures of a finite signature \(F\) that is closed under isomorphism and has no upper bound on the size of its members, let \(R\) be the property of being rigid (i.e., having a trivial automorphism group), let \(P\) be a property \(F\)-structures that is invariant under isomorphism, and assume that we have \(\prl(R,\c{K}) = 1\). If one of \(\pru(P,\c{K})\) and \(\prl(P,\c{K})\) exists, then both quantities exist and are equal.
\end{proposition}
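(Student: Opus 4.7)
The plan is to express both probabilities as weighted sums over isomorphism classes using the orbit--stabilizer identity
\[
|\c{L}_n| \;=\; \sum_{\bf{A} \in \c{U}_n} \frac{n!}{|\operatorname{Aut}(\bf{A})|}
\]
together with its relativised form $|\{\bf{A} \in \c{L}_n : \bf{A} \models P\}| = \sum_{\bf{A} \in \c{U}_n,\, \bf{A} \models P} n!/|\operatorname{Aut}(\bf{A})|$, and thereby reduce each of $\prl(P,\c{K})$ and $\pru(P,\c{K})$ to a common limit over the rigid part of $\c{U}_n$. Writing $R_n$ for the rigid members of $\c{U}_n$ and $NR_n = \c{U}_n \setminus R_n$ for the non-rigid ones, rigid structures contribute exactly $n!$ apiece to each sum, while non-rigid ones contribute at most $n!/2$ apiece. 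The hypothesis $\prl(R,\c{K}) = 1$, i.e.\ $n!|R_n|/|\c{L}_n| \to 1$, therefore gives $|\c{L}_n| \sim n!|R_n|$ and $\sum_{\bf{A} \in NR_n} n!/|\operatorname{Aut}(\bf{A})| = o(n!|R_n|)$.

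For any isomorphism-invariant $P$, the non-rigid contribution to $|\{\bf{A} \in \c{L}_n : \bf{A} \models P\}|$ is dominated by this $o(|\c{L}_n|)$ sum, so dividing by $|\c{L}_n|$ yields
\[
\frac{|\{\bf{A} \in \c{L}_n : \bf{A} \models P\}|}{|\c{L}_n|} \;-\; \frac{|\{\bf{A} \in R_n : \bf{A} \models P\}|}{|R_n|} \;\longrightarrow\; 0.
\]
Hence $\prl(P,\c{K})$ exists exactly when $\alpha \deq \lim_n |\{\bf{A} \in R_n : \bf{A} \models P\}|/|R_n|$ does, and the two agree. The matching reduction for $\pru(P,\c{K})$ will rely on the corresponding fact $\pru(R,\c{K}) = 1$: once $|NR_n|/|\c{U}_n| \to 0$ is known, both $|\c{U}_n|$ and $|\{\bf{A} \in \c{U}_n : \bf{A} \models P\}|$ are asymptotically equal to their rigid parts, and $\pru(P,\c{K})$ likewise exists exactly when $\alpha$ does and equals it. Combining the two reductions gives the proposition.

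The main obstacle is the auxiliary implication $\prl(R,\c{K}) = 1 \Rightarrow \pru(R,\c{K}) = 1$: since the labelled weighting $n!/|\operatorname{Aut}(\bf{A})|$ heavily privileges rigid classes over symmetric ones, the smallness of the weighted non-rigid total does not automatically entail smallness of the unweighted count $|NR_n|$. Establishing this implication --- by bounding the number of non-rigid iso classes of each automorphism-group order in terms of the available sum information and adding the bounds carefully --- is the technical heart of the argument; once it is in hand, the remaining bookkeeping around $\alpha$ goes through as described.
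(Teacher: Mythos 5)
The paper does not actually prove this proposition: it is imported from Freese \cite{2kinds} without proof, so there is no argument in the text to measure yours against. Your labelled-side reduction is correct and complete as far as it goes. Orbit--stabilizer gives \(|\c{L}_n| = \sum_{\bf{A}\in\c{U}_n} n!/|\operatorname{Aut}(\bf{A})|\); the hypothesis \(\prl(R,\c{K})=1\) makes the non-rigid labelled mass \(o(n!\,|R_n|)\); and therefore \(\prl(P,\c{K})\) exists exactly when \(\alpha=\lim_n |\{\bf{A}\in R_n : \bf{A}\models P\}|/|R_n|\) does, with the same value.

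The gap is the step you yourself defer as ``the technical heart'': deducing \(\pru(R,\c{K})=1\), i.e.\ \(|NR_n|=o(|R_n|)\), from \(\prl(R,\c{K})=1\). This cannot be extracted from the sum information you have set up. The only a priori lower bound on a summand \(n!/|\operatorname{Aut}(\bf{A})|\) is \(1\), so \(\sum_{\bf{A}\in NR_n} n!/|\operatorname{Aut}(\bf{A})| = o(n!\,|R_n|)\) yields only \(|NR_n| = o(n!\,|R_n|)\), which is vacuous; an isomorphism class with a huge automorphism group is almost invisible in the labelled count but still costs a full unit in the unlabelled count. Indeed, at the stated level of generality the auxiliary implication fails outright: for each \(n\) let \(B_n\) be the set of isomorphism types of \(n\)-vertex graphs with \(|\operatorname{Aut}|\ge n\) (enormous, since it contains every graph with \(k\) isolated vertices and a rigid remainder once \(k!\ge n\)), and let \(A_n\) consist of \(\lceil |B_n|/\sqrt{n}\rceil\) rigid isomorphism types; the isomorphism closure of \(\bigcup_n(A_n\cup B_n)\) satisfies all the stated hypotheses, each member of \(B_n\) contributes at most \(n!/n\) labelled copies so the labelled rigidity fraction is at least \(1/(1+1/\sqrt{n})\to 1\), yet the unlabelled rigidity fraction is \(1/(1+\sqrt{n})\to 0\). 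So your plan of ``bounding the number of non-rigid iso classes of each automorphism-group order and adding the bounds carefully'' cannot succeed from the hypothesis \(\prl(R,\c{K})=1\) and isomorphism-closure alone; a complete argument must either take the unlabelled rigidity statement as input, establish it directly for the particular class at hand, or rely on whatever stronger hypotheses Freese's theorem actually carries. You need to consult \cite{2kinds} for the precise statement and supply that missing ingredient before the unlabelled half of your reduction is available.
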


Now we are in the position to recall the definiton of an almost sure theory.

\begin{definition}[Almost sure theory]
Let \(\c{K}\) be a class of finite structures of a finite signature \(F\) that is closed under isomorphism and has no upper bound on the size of its members. We call the set of all first-order sentences \(\sigma\) with \(\prl(\sigma,\c{K}) = 1\) the \emph{almost sure theory of \(\c{K}\)}.
\end{definition}

Next, we will give a brief introduction to relation algebras. For a more extensive introduction, we refer the reader to Hirsch and Hodkinson~\cite{rabg}, Maddux~\cite{madduxRA}, and Givant~\cite{intRA}. We begin by defining relation algebras. To match the notation used in lattice theory, we will use \(\j\) and \(\m\) rather than \(+\) and \(\cdot\). This allows us to use a more group theoretic notation by using \(\cdot\) and \(e\)  rather than \(;\) and \(1'\). We will assume that unary operations are applied first and use multiplicative notation~for~\(\cdot\). Therefore \(x\b z \m y'\) means \(((x \b) \cdot z) \m (y ')\), for example. We call \(d \deq e'\) the \emph{diversity element}. We use \(\approx\) for logical equality, as in universal algebra. To avoid reusing symbols, we wil use \(\curlyvee\) for logical disjunction, \(\curlywedge\) for logical conjunction, and \(\neg\) for logical negation. Where it is possible, we will usually drop superscripts on operations.

\begin{definition}[Relation algebras] 
An algebra \(\bf{A} = \inn{A;\j,\m,\cdot,{'},{\b}, 0,1,e}\) is called a \emph{nonassociative relation algebra} iff \(\inn{A;\j,\m,{'},0,1}\) is a Boolean algebra, \(e\) is an identity element~for~\(\cdot\), and the \emph{triangle laws} hold, i.e., we have
\[
xy \m z = 0 \iff x\b z \m y = 0 \iff zy\b \m x = 0,
\]
for all \(x,y,z \in A\). The class of all nonassociative relation algebras will be denoted by \(\sf{NA}\). An algebra \(\bf{A} \in \sf{NA}\) is called a \emph{relation algebra} iff \(\cdot\) is associative.  The class of all relation algebras will be denoted by \(\sf{RA}\). An algebra \(\bf{A} \in \sf{NA}\) is said to be \emph{symmetric} iff \(\bf{A} \models x\b \approx x\).
\end{definition}

We extend ideas from Boolean algebra to these algebras in the obvious way. For example, an atom of a nonassociative relation algebra is an atom of its Boolean algebra reduct.

Some basic properties of these algebras are summarised in the following result.

\begin{proposition}\label{basicRA}
Let \(\bf{A} \in \sf{NA}\).
\begin{enumerate}
\item
\(\bf{A} \models x(y \j z) \approx xy \j xz\) and \(\bf{A} \models (x \j y)z \approx xz \j yz\).
\item
\(\bf{A} \models (x \j y)\b \approx x\b \j y\b\).
\item
\(\bf{A} \models 0\b = 0\), \(\bf{A} \models 1\b = 1\), \(\bf{A} \models e\b = e\), and \(\bf{A}\models d\b = d\).
\item
\(\bf{A} \models x\b\b \approx x\).
\item
If \(a\) is an atom, then \(a\b\) is an atom.
\item
If \(\bf{A} \models e \approx 0\), then \(\bf{A}\) is trivial.
\end{enumerate}
\end{proposition}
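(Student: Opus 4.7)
The unifying device is the elementary Boolean fact that \(a = b\) iff \(a \m w = 0 \iff b \m w = 0\) for every \(w\); this reduces each identity to showing that two families of orthogonality conditions coincide, and the triangle laws are tailor-made for such rewrites. I would prove the parts in the order (1), (4), then a short auxiliary identity, then (2), (3), (5), (6).

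For part (1), I would verify that for every fixed \(w\),
\[
x(y \j z) \m w = 0 \iff (xy \j xz) \m w = 0,
\]
by applying the first triangle equivalence to the left-hand side (obtaining \(x\b w \m (y \j z) = 0\)), splitting the join over the meet, and reapplying the triangle law to each piece; the dual distribution is analogous via the third form of the triangle law. For part (4), chaining the first triangle equivalence with itself yields \(xy \m z = 0 \iff x\b\b y \m z = 0\) for every \(z\); the meta-principle then gives \(xy = x\b\b y\), and setting \(y = e\) yields \(x\b\b = x\).

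Setting \(z = e\) in the first triangle equivalence now produces the key auxiliary identity
\[
u\b \m w = 0 \iff uw \m e = 0.
\]
Part (2) follows by expanding \((u \j v)\b \m w = 0\) through this identity, using part (1) to distribute, and then contracting the summands back; monotonicity of \(\b\) is immediate afterwards, since \(u \le v\) gives \(v\b = u\b \j v\b\). Part (3) drops out of the auxiliary identity by direct substitution: \(0\b = 0\) and \(e\b = e\) can be read off at once; \(d\b = d\) follows by taking \(w = e\) together with part (4); and \(1\b = 1\) follows from \(1\b \le 1\) together with the reverse inequality obtained by applying \(\b\) to both sides and invoking part (4). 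Part (5) is immediate, since by (4) and monotonicity \(\b\) is an order automorphism of the Boolean reduct and hence carries atoms to atoms. Part (6) uses that \(x \cdot 0 = 0\), which is a one-line consequence of the triangle law (the condition \(x \cdot 0 \m z = 0\) translates to \(x\b z \m 0 = 0\), which is trivially true); then \(e = 0\) forces \(x = xe = x \cdot 0 = 0\).

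The only obstacle worth flagging is discipline about the order of proof: a direct attack on (2) or (3) without first securing (1), (4), and the auxiliary identity would be awkward. The proof of \(1\b = 1\) is the one place that uses a small sleight of hand, namely using involutivity of converse to upgrade the easy inequality \(1\b \le 1\) to equality.
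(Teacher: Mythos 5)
Your proof is correct, but there is nothing in the paper to compare it against: the paper states this proposition without proof, as a summary of standard facts about \(\sf{NA}\) deferred to the cited references (Maddux, Hirsch--Hodkinson, Givant). Your argument is essentially the classical one from those sources: reduce every identity to the Boolean separation principle that \(a = b\) iff \(a \m w = 0 \Leftrightarrow b \m w = 0\) for all \(w\), and shuttle the conditions around with the triangle laws. All six parts check out, including the chaining trick for \(x\b\b \approx x\) and the \(z = e\) specialisation giving \(u\b \m w = 0 \Leftrightarrow uw \m e = 0\). The only point worth tightening is that "\(0\b = 0\) can be read off at once" silently uses \(0w \m e = 0\), which itself needs one more pass through the triangle law (or the identity \(0 \cdot w \approx 0\), which you only record under part (6)); since that is a one-line fix, it is an ordering quibble rather than a gap.
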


Based on Proposition \ref{basicRA}, the operations of a complete atomic (and, in particular, a finite) nonassociative relation algebra are completely determined by their values on its atoms. Since a finite \(\bf{A}\in \sf{NA}\) has \(\log_2(|A|)\) atoms, this means these algebras are determined by a small subset of its elements. This motivates the following definitions. When \(e\) is an atom, it is sometimes convenient to include it in the signature rather than a unary relation.
\begin{definition}[Atom structure]
Let \(\bf{A}\) be a complete atomic nonassociative relation algebra. We call \(\At(\bf{A}) \deq \inn{\at(\bf{A}); f_\bf{A}, I_{\bf{A}}, T_{\bf{A}}}\) the \emph{atom structure} of \(\bf{A}\), where \(\at(\bf{A})\) denotes the set of all atoms of \(\bf{A}\), \(f_\bf{A}\) is defined by \(x \mapsto x\b\), \(I_{\bf{A}} \deq \{ a \in \at(\bf{A}) \mid a \le e\}\), and \(T_\bf{A} \deq \{ (a,b,c) \in \at(\bf{A})^3 \mid ab \ge c\}\). If \(e\) is an atom, we put \(\At_e(\bf{A}) \deq \inn{\at(\bf{A}); f_\bf{A}, e,T_\bf{A}}\).
\end{definition}

It turns out that these structures can be axiomatised. 

\begin{definition}[\(\sf{FAS}\), \(\sf{FSIAS}\), and \(\sf{FSIAS}_e\)] 
Let \(\sf{FAS}\) denote the class of all finite structures of the signature \(\{f,T,I\}\) (where \(f\) is a unary operation symbol, \(I\) is a unary relation symbol, and \(T\) is a ternary relation symbol) that satisfy
\begin{enumerate}
\item[\textup{(P)}]
for all \(a,b,c \in U\), we have \((f(a),c,b) , (c,f(b),a) \in T\) whenever \((a,b,c) \in T\),
\item[\textup{(I)}]
for all  \(a,b \in U\), we have \(a = b\) if and only if \(i \in I\) with \((a,i,b) \in T\).
\end{enumerate}
We call a \(\{f,T,I\}\)-structure \(\bf{U}\) \emph{integral} iff we have \(|I| = 1\), and \emph{symmetric} iff \(\bf{U} \models f(x) \approx x\). The class of all symmetric and integral members of \(\sf{FAS}\) will be denoted by \(\sf{FSIAS}\). Now, let \(\sf{FSIAS}_e\) be the class of all finite structures of the signature \(\{f,e,T\}\)  (where \(f\) is a unary operation symbol, \(e\) is a nullary operation symbol, i.e., a constant, and \(T\) is a ternary relation symbol) that satisfy \(f(x) \approx x\) and
\begin{enumerate}
\item[\textup{(IP)}]
for all \(a,b,c \in U\), we have \((f(a),c,b) , (c,f(b),a) \in T\) whenever \((a,b,c) \in T\),
\item[\textup{(II)}]
for all  \(a,b \in U\), we have \(a = b\) if and only if there is some \((a,e,b) \in T\).
\end{enumerate}
\end{definition}

The above definition abuses language slightly; a non-trivial \(\bf{A} \in \sf{NA}\) is called \emph{integral} iff  \(xy = 0\) implies that \(x=0\) or \(y = 0\), which is equivalent to \(e\) being an atom when \(\bf{A} \in \sf{RA}\), but not in general. We refer to Maddux \cite{madduxRA} and Maddux \cite{varcontRA} for further details.

\begin{proposition}\label{atcm}
\(\sf{FAS}\) is precisely the class of all atom structures of finite members~of~\(\sf{NA}\), \(\sf{FSIAS}\) is precisely the class of all atom structures of finite, integral, and symmetric members of \(\sf{NA}\). There are bijective correspondences between the sets of isomorphism classes from:
\begin{enumerate}
\item
the class of finite members of \(\sf{NA}\) and \(\sf{FAS}\);
\item
the class of finite, integral, and symmetric members of \(\sf{NA}\) and \(\sf{FISAS}\);
\item
\(\sf{FSIAS}\) and \(\sf{FSIAS}_e\).
\end{enumerate}
\end{proposition}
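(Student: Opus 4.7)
The plan is to build an explicit two-sided inverse of the atom-structure map using the complex-algebra construction. Given \(\bf{U} = \inn{U;f,I,T} \in \sf{FAS}\), I would define \(\Cm(\bf{U})\) to have universe \(\pw(U)\), Boolean operations inherited from the powerset Boolean algebra, composition \(X \cdot Y \deq \{c \in U \mid (\exists a \in X)(\exists b \in Y)\,(a,b,c) \in T\}\), converse \(X \b \deq \{f(a) \mid a \in X\}\), and identity \(e \deq I\). The atoms of \(\Cm(\bf{U})\) are then exactly the singletons, and every element is the join of the atoms below it.

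For the forward direction, given a finite \(\bf{A} \in \sf{NA}\), parts (4) and (5) of Proposition \ref{basicRA} show that \({\b}\) restricts to an involution on atoms, so \(f_\bf{A}\) is well defined. Property (P) follows from the triangle laws: if \((a,b,c) \in T_\bf{A}\), i.e., \(ab \m c \ne 0\), then \(a\b c \m b \ne 0\) and \(cb\b \m a \ne 0\), and since \(a, b\) are atoms these give \(a\b c \ge b\) and \(cb\b \ge a\), i.e., \((f(a),c,b), (c,f(b),a) \in T_\bf{A}\). Property (I) reflects that \(e\) is the identity, read off at the atomic level. In the opposite direction, to check \(\Cm(\bf{U}) \in \sf{NA}\) the Boolean part is automatic; (I) combined with (P) (which also forces \(f(I) = I\)) yields that \(I\) is a two-sided identity for \(\cdot\); and the triangle laws hold because \(\cdot\) and \({\b}\) distribute over arbitrary unions in \(\Cm(\bf{U})\) straight from the definitions, so one reduces to singleton arguments, where the three equivalent statements become exactly (P).

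The bijection in (1) then follows: \(a \mapsto \{a\}\) is an isomorphism \(\bf{U} \to \At(\Cm(\bf{U}))\) by construction, and \(a \mapsto \{x \in \at(\bf{A}) \mid x \le a\}\) is an isomorphism \(\bf{A} \to \Cm(\At(\bf{A}))\); preservation of \(\cdot\) by the latter reduces via Proposition \ref{basicRA}(1) and finite additivity to the atom case, where it is the definition of \(T_\bf{A}\). For (2), symmetry of \(\bf{A}\) translates literally to \(f_{\At(\bf{A})} = \id\), and (with the abuse of language explained above the statement) integrality translates to \(|I_{\At(\bf{A})}| = 1\); both constructions respect these properties in both directions, so the bijection in (1) restricts to the subclasses. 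Part (3) is purely notational: in any member of \(\sf{FSIAS}\) one has \(|I| = 1\), so the unary relation \(I\) is interdefinable with a constant \(e\) naming its unique element. The main obstacle is verifying carefully that (P) is equivalent to the triangle laws on the complex algebra; the crux is that since the triangle laws are Horn in \(\m\) and \(0\), distributivity of \(\cdot\) and \({\b}\) over arbitrary unions reduces them to singleton arguments, at which point each implication becomes exactly one of the closure conditions packaged in (P).
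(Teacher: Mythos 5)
The paper never proves Proposition~\ref{atcm}: it is stated in the preliminaries as a known fact (it is essentially the atom-structure/complex-algebra duality from Maddux's work, e.g.\ \cite{varcontRA} and \cite{fira}), so there is no in-paper argument to compare yours against. Your proposal reconstructs exactly the standard proof that is being cited, and it is correct in outline: \(\Cm\) and \(\At\) are mutually inverse up to the canonical isomorphisms \(a \mapsto \{a\}\) and \(a \mapsto \{x \in \at(\bf{A}) \mid x \le a\}\), the triangle laws reduce to singletons by complete additivity, and parts (2) and (3) are restrictions and a notational translation.

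Two small points deserve to be made explicit. First, for the triangle laws in \(\Cm(\bf{U})\) you need the singleton equivalences to run in \emph{both} directions, e.g.\ \((f(a),c,b) \in T\) must imply \((a,b,c) \in T\); applying (P) to \((f(a),c,b)\) only yields \((f(f(a)),b,c) \in T\), so you need \(f \circ f = \id\). The paper's axioms for \(\sf{FAS}\) do not assert this, but it is derivable: by (I) there is \(i \in I\) with \((a,i,a) \in T\), two applications of (P) give \((f(f(a)),i,a) \in T\), and (I) then forces \(f(f(a)) = a\). A similar short derivation gives \(f(i) = i\) for \(i \in I\), which you implicitly use (correctly) when proving that \(I\) is a \emph{left} identity, since (I) as stated only controls triples with the identity atom in the middle position. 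Second, the paper itself warns that ``integral'' for members of \(\sf{NA}\) officially means the absence of zero divisors, which for nonassociative algebras is not literally equivalent to \(e\) being an atom; your reading of part (2) --- integrality of \(\bf{A}\) corresponding to \(|I_{\At(\bf{A})}| = 1\), i.e.\ to \(e\) being an atom --- is the interpretation under which the statement is true and is the one the paper relies on in Section~\ref{mains}, so you are right to flag the abuse of language rather than try to prove the zero-divisor version.
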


Next, we introduce the notion of a cycle (from Maddux \cite{fira}) which can be used to define and describe these structures.

\begin{definition}[Cycles]
Let \(\bf{U}\) be a \(\{f,T,I\}\)-structure and let \(a,b,c \in U\). We call \((a,b,c)\), \((f(a),c,b)\), \((b,f(c),f(a))\), \((f(b),f(a),f(c))\), \((f(c),a,f(b))\), and \((c,f(b),a)\) the \emph{Peircean transforms} of \((a,b,c)\). The set of all of these triples is called a \emph{cycle}, and is denoted by  \([a,b,c]\).  We call \((a,b,c)\) an \emph{identity triple} iff  \(I \cap \{a,b,c\} \neq \varnothing\), and a \emph{diversity triple} otherwise. We call \([a,b,c]\) an \emph{identity cycle} iff it contains an identity triple, and a \emph{diversity cycle} otherwise. We call \((a,b,c)\) \emph{consistent} iff \((a,b,c) \in T\), and \emph{forbidden} otherwise. We call \([a,b,c]\) \emph{consistent} iff  \([a,b,c] \subseteq T\), and \emph{forbidden} iff \([a,b,c] \cap T = \varnothing\). We call \(a\) an \emph{identity atom} iff \(a \in I\) and a \emph{diversity atom} otherwise. We extend these ideas to \(\{f,e,T\}\) in the obvious way. 
\end{definition}

The following result from \cite{fira} illustrates the connection between cycles and the axioms for \(\sf{FAS}\) and \(\sf{FSIAS}_e\).

\begin{proposition}\label{asprop}
\begin{enumerate}
\item
Let \(\bf{U}\) be an \(\{f,T,I\}\)-structure. 
\begin{enumerate}
\item
The following are equivalent:
\begin{enumerate}
\item
\(\bf{U}\) satisfies \textup{(P)};
\item
for all \(a,b,c \in U\), the cycle \([a,b,c]\) is either consistent or forbidden.
\end{enumerate}
\item
The following are equivalent:
\begin{enumerate}
\item
\(\bf{U}\) satisfies \textup{(I)};
\item
for all \(a, b \in U\), we have \(a = b\) if and only if \([a,i,b]\) is consistent, for some \(i \in I\).
\end{enumerate}
\item
If \(\bf{U}\) is integral, then the following are equivalent:
\begin{enumerate}
\item
\(\bf{U}\) satisfies \textup{(I)};
\item
\(f(e) = e\) and \(\{[a,e,a] \mid a \in U\}\) is the set of consistent identity cycles, where \(e\) is the unique element of \(I\).
\end{enumerate}
\end{enumerate}
\item
Let \(\bf{U}\) be a \(\{f,e,T\}\)-structure.
\begin{enumerate}
\item
The following are equivalent:
\begin{enumerate}
\item
\(\bf{U}\) satisfies \textup{(IP)};
\item
for all \(a,b,c \in U\), the cycle \([a,b,c]\) is either consistent or forbidden.
\end{enumerate}
\item
The following are equivalent:
\begin{enumerate}
\item
\(\bf{U}\) satisfies \textup{(II)};
\item
\(f(e) = e\) and \(\{[a,e,a] \mid a \in U\}\) is the set of consistent identity cycles.
\end{enumerate}
\end{enumerate}
\end{enumerate}
\end{proposition}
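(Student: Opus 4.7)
The plan is to prove all five equivalences by direct translation between the axiomatic conditions and their cycle-based reformulations, exploiting the fact that (P) (respectively (IP)) encodes precisely the symmetries that permute the six Peircean transforms of a triple.

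For part 1(a), I would verify that iterating the two implications in (P) on a triple $(a,b,c)$, together with the involutivity $f \circ f = \id$ (derivable from (P) and (I), and implicit in any atom structure), produces exactly the six Peircean transforms comprising $[a,b,c]$. The forward direction is then immediate: any triple in $T$ drags the whole cycle into $T$, so each cycle is consistent or forbidden. The converse is even easier, since $(a,b,c) \in T$ prevents $[a,b,c]$ from being forbidden, and consistency of $[a,b,c]$ puts $(f(a),c,b)$ and $(c,f(b),a)$ in $T$. Part 2(a) is the same argument with (IP) in place of (P).

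For part 1(b), the direction (ii) $\Rightarrow$ (i) is immediate because $(a,i,b)$ is one of the six triples of $[a,i,b]$. For (i) $\Rightarrow$ (ii), assume (I) and $a = b$; then (I) supplies some $i \in I$ with $(a,i,a) \in T$, and the Peircean closure established in part (a) forces the cycle $[a,i,a]$ to be consistent. Conversely, if $[a,i,b]$ is consistent for some $i \in I$, then $(a,i,b) \in T$ and (I) yields $a = b$.

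For parts 1(c) and 2(b), we use integrality to fix $I = \{e\}$. To obtain $f(e) = e$, observe that (I) (respectively (II)) and $e = e$ give some $i \in I$ with $(e,i,e) \in T$, but integrality forces $i = e$, so $(e,e,e) \in T$. Applying (P) (respectively (IP)) produces $(f(e),e,e) \in T$, and a second application of (I)/(II) yields $f(e) = e$. Next, any consistent identity cycle $[a,b,c]$ contains some identity triple, and combining (I)/(II) on that triple with the Peircean symmetries rearranges the cycle into the form $[a,e,a]$. Conversely, each cycle $[a,e,a]$ is consistent, either by part 1(b) or by direct use of (II) and (IP). The remaining implications in 1(c) and 2(b) now follow from this cycle description together with $f(e) = e$. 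The main obstacle is the careful bookkeeping of the Peircean orbit, verifying that (P) (or (IP)) and the involutivity of $f$ generate exactly the six Peircean transforms of any triple; once this orbit structure is in hand, all other translations reduce to routine verifications.
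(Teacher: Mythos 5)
The paper itself offers no proof of this proposition: it is imported as background from Maddux's \emph{Finite integral relation algebras} \cite{fira}, so there is no in-paper argument to compare yours against. Your strategy --- tracking the orbit of a triple under the two maps $(x,y,z)\mapsto(f(x),z,y)$ and $(x,y,z)\mapsto(z,f(y),x)$ and then translating (I)/(II) through that orbit --- is the standard and essentially correct one, and your treatment of $f(e)=e$ and of the classification of consistent identity cycles in 1(c) and 2(b) is sound.

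Two of your steps, however, would fail for the hypotheses as literally stated, and you should surface the implicit standing assumptions. First, your orbit computation for 1(a) uses $f\circ f=\id$, which you justify as ``derivable from (P) and (I)''; but 1(a) is asserted for an arbitrary $\{f,T,I\}$-structure satisfying (P) \emph{alone}, where (I) is not available, and without involutivity the equivalence is actually false: take $U=\{a,b\}$, $f(a)=f(b)=b$, $T=\{(b,b,b)\}$; then (P) holds, yet $[a,a,a]$ contains $(b,b,b)\in T$ and $(a,a,a)\notin T$, so that cycle is neither consistent nor forbidden. Second, in 1(b) you call (ii) $\Rightarrow$ (i) ``immediate'', but the backward half of (I) --- that $(a,i,b)\in T$ for some $i\in I$ forces $a=b$ --- does not follow from (ii) unless you already know that $(a,i,b)\in T$ makes $[a,i,b]$ consistent, which again requires (P); one can construct a (P)-violating structure satisfying (ii) but not (I). The same caveat affects the (ii) $\Rightarrow$ (i) directions of 1(c) and 2(b). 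In Maddux's setting $f$ is an involution by fiat and these conditions are considered jointly, so your argument is correct in the intended context, but a self-contained proof must either add those hypotheses explicitly or prove the involutivity from (P) together with (I) \emph{before} using it, and must restrict the standalone equivalences accordingly.
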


Based on Proposition~\ref{atcm} and Proposition~\ref{asprop}, once given a finite set \(U\), some \(e \in U\), and an involution \(f \colon U \to U\) with \(f(e) = e\), each \(\bf{U} \in \sf{FAS}\) that is an expansion of \(\inn{U;f,\{e\}}\) is completely determined by which cycles are consistent or forbidden. Using this observation, it is possible count the number of atom-structures of a given (finite) size. Indeed, in \cite{fira}, Maddux obtains asymptotic formulas using this method. The results we will need from \cite{fira} are summarised in the following results.

\begin{proposition}\label{cyclecount}
Let \(U\) be an \(n\)-element set, for some \(n \in \mathbb{N}\), let \(e \in U\), let \(f\) be an involution of \(U\) with \(f(e) = e\), and let \(s \deq |\{a \in U \mid f(a) = a\}|\).
\begin{enumerate}
\item
There are \(s-1\) diversity cycles with \(1\) triple; ones of the form \([a,a,a]\).
\item
There are \((n-s)/2\) diversity cycles with \(2\) triples; ones of the form \([a,a,f(a)]\), where \(f(a) \neq a\).
\item
There are \((s-1)(n-2)\) diversity cycles with \(3\) triples; ones of the form \([a,b,b]\), where \(f(a)=a\) and \(a \neq b\).
\item
There are \((n-1)((n-1)^2-3s+2)/6 + (s-1)/2\) diversity cycles with \(6\) triples.
\item
There are \(Q(n,s) \deq (n-1)((n-1)^2+3s-1)/6\) diversity cycles in total.
\item
There are \(P(n,s) \deq (s-1)! ((n-s)/2)! 2^{(n-s)/2}\) automorphisms of \(\inn{U;f,\{e\}}\).
\end{enumerate}
\end{proposition}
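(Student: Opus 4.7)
The plan is to recognise the six Peircean transforms as the orbit of $(a, b, c)$ under an action of a group $G$ isomorphic to $S_3$, generated by the involution $\tau \colon (x, y, z) \mapsto (f(x), z, y)$ together with the three-cycle $\rho \colon (x, y, z) \mapsto (y, f(z), f(x))$. Since $|G| = 6$, every cycle size divides six, and each cycle is determined by its stabiliser subgroup, so the enumeration splits naturally by orbit size, matching the four cases of the proposition.

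For parts (1)--(3) I would proceed by direct case analysis on the stabiliser. A full stabiliser forces $a = b = c$ with $f(a) = a$, and after excluding $a = e$ we obtain the $s - 1$ one-triple cycles. A stabiliser equal to $\inn{\rho}$ forces $(a, b, c) = (a, a, f(a))$ with $f(a) \ne a$, and distinct orbits correspond bijectively to the $(n - s)/2$ unordered pairs $\{a, f(a)\}$ with $f(a) \ne a$. For cycles of size three (i.e., order-two stabilisers) I would verify that every such cycle contains a unique representative of the form $(a, b, b)$ with $f(a) = a$ and $a \ne b$; excluding $e$ from each position then gives $(s - 1)(n - 2)$ cycles. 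For part (4), rather than a direct enumeration, I would invoke the counting identity $n_1 + 2 n_2 + 3 n_3 + 6 n_6 = (n - 1)^3$, where the right-hand side counts the diversity triples (those with no coordinate equal to $e$); substituting the formulas from (1)--(3) and solving for $n_6$ yields the stated expression after a short algebraic simplification. Part (5) follows as the sum $n_1 + n_2 + n_3 + n_6$. For part (6), an automorphism of $\inn{U; f, \{e\}}$ is a permutation of $U$ that commutes with $f$ and fixes $e$; such a permutation freely permutes the $s - 1$ non-identity fixed points of $f$, freely permutes the $(n - s)/2$ two-cycles of $f$, and independently decides within each two-cycle whether to swap, giving $(s - 1)! \cdot ((n - s)/2)! \cdot 2^{(n - s)/2} = P(n, s)$.

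The only step requiring real care is the uniqueness check for part (3). The three triples in an order-three cycle are $(a, b, b)$, $(b, f(b), a)$, and $(f(b), a, f(b))$; one must confirm that neither of the latter two can be written in the form $(x, y, y)$ with $f(x) = x$ and $x \ne y$ without forcing $a = b$, which would contradict our assumption. Granting that check, the count $(s - 1)(n - 2)$ is exactly the number of allowed pairs $(a, b)$ with $a$ a non-identity fixed point of $f$ and $b \in U \setminus \{a, e\}$. Everything else reduces to orbit--stabiliser bookkeeping together with the routine simplification for part (4) and the classical permutation-counting argument for part (6).
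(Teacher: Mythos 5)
Your argument is correct and complete: the identification of the six Peircean transforms with an $S_3$-orbit (generated by $\tau\colon(x,y,z)\mapsto(f(x),z,y)$ and $\rho\colon(x,y,z)\mapsto(y,f(z),f(x))$) is accurate, the stabiliser case analysis for parts (1)--(3) checks out, the identity $n_1+2n_2+3n_3+6n_6=(n-1)^3$ does yield part (4) after simplification, and part (6) is the standard count of permutations commuting with an involution and fixing $e$. Note, however, that the paper offers no proof of this proposition to compare against: it is stated as a quoted result from Maddux's \emph{Finite integral relation algebras}, where the count is obtained by directly enumerating the possible shapes of cycles. Your orbit--stabiliser framing organises that same enumeration more systematically, and in particular makes the uniqueness of the representative $(a,b,b)$ in part (3) a consequence of the fact that the three order-two stabilisers in a size-three orbit are distinct conjugates, rather than something to verify by hand.
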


Before we state the next result, we will pause to recall the definition of representability.

\begin{definition}[\sf{RRA}]
Let \(E\) be an equivalence relation over a set \(D\). We call the structure \(\inn{\pw(E); \cup, \cap, {\mid}, {^c}, {^{-1}}, \varnothing, E, \id_D}\) the \emph{proper relation algebra} on \(E\), where  \(\pw(E)\) be the powerset (i.e., set of all subsets) of \(E\), \(\mid\) is relational composition, \({^{c}}\) is set complement relative to \(E\), \({^{-1}}\) is relational inverse, and \(\id_D\) is the identity relation on \(D\). Thus, for each \(R,S \subseteq E\), 
\begin{align*}
R \mid S &= \{ (x,z) \in D^2 \mid (x,y) \in R, (y,z) \in S, \rm{ for some } y \in D\}, \\
R^{-1} &= \{ (y,x) \in D^2 \mid (x,y) \in R\}, \\
\id_D & = \{ (x,y) \in D^2 \mid x = y \}.
\end{align*}
A relation algebra is said to be \emph{representable} iff it embeds into a proper relation algebra. The class of representable relation algebras will be denoted by \(\sf{RRA}\).
\end{definition}

The problem of determining whether every relation algebra is representable was the main focus of research into relation-type algebras for many years, until it was solved in the negative by Lyndon in \cite{lyndon}. Therefore one might be interested in the asymptotics of the fraction of representable relation algebras. Maddux took the first steps in this direction in \cite{fira}.

\begin{proposition}\label{firasumm}
\begin{enumerate}
\item
Almost all integral labelled structures in \(\sf{FAS}\) are rigid.
\item
If \(E\) is the conjunction of a finite set of equations that hold in all members of \(\sf{RRA}\), then \(E\) holds in almost all finite elements of \(\sf{NA}\) in which \(e\) is an atom. In particular, almost all nonassociative relation algebras in which \(e\) is an atom are relation algebras.
\end{enumerate}
\end{proposition}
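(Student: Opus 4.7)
The plan is to handle the two parts separately by exploiting the cycle-by-cycle description of integral labelled atom structures. Once a universe \(U\) of size \(n\), a point \(e\in U\), and an involution \(f\) with \(f(e)=e\) are fixed, each integral member of \(\sf{FAS}\) expanding \(\inn{U;f,\{e\}}\) is specified by independently declaring each of the \(Q(n,s)\) diversity cycles either consistent or forbidden, so there are exactly \(2^{Q(n,s)}\) such structures, and the total labelled count is obtained by summing over \((e,f)\).

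For part (1), any automorphism of such a structure is a permutation \(\pi\) of \(U\) that fixes \(e\), commutes with \(f\), and permutes the set of diversity cycles setwise; hence the number of structures on which a given non-identity \(\pi\) acts as an automorphism is \(2^{O(\pi)}\), where \(O(\pi)\) denotes the number of \(\pi\)-orbits on those cycles. The key estimate is that each atom moved by \(\pi\) lies in \(\Theta(n^{2})\) diversity cycles, so \(\pi\) merges at least \(cn^{2}\) cycles into non-singleton orbits and \(O(\pi)\le Q(n,s)-cn^{2}/2\) for an absolute constant \(c>0\). Since there are at most \(n!\le 2^{n\log n}\) candidate \(\pi\), a union bound shows that only a \(2^{n\log n-cn^{2}/2}=o(1)\) fraction of the \(2^{Q(n,s)}\) structures admit any non-trivial automorphism, and summing over \((e,f)\) preserves this ratio.

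For part (2), the strategy is to translate \(E\) into a first-order sentence about the atom structure and to establish the latter by an exponentially decaying first-moment argument. Expanding and distributing yields, for each target atom \(d\) and each tuple of parameters \(\bar{a}\), an equivalence between two existential cycle-theoretic conditions of the form ``there exist witnessing atoms satisfying a bounded family of \(T\)-facts''; because \(E\) holds in arbitrarily large representable relation algebras, this equivalence is internally consistent, and the only way it can fail in a finite random atom structure is by a missing witness. For any fixed \(\bar{a}\) and any bounded witness pattern, each of the \(n-O(1)\) candidate witnesses succeeds with constant probability \(\gamma>0\) using a distinct family of cycles, so the probability of total failure is at most \((1-\gamma)^{n-O(1)}\). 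Summing over the \(O(n^{k})\) choices of \(\bar{a}\) and the finitely many patterns supplied by \(E\) gives an expected failure count of \(O(n^{k}(1-\gamma)^{n})=o(1)\). Combining this with part (1) via Proposition~\ref{rigid} and with Proposition~\ref{atcm} yields the claim for finite members of \(\sf{NA}\) in which \(e\) is an atom, and the ``in particular'' clause follows by taking \(E\) to be the associative law, which is valid throughout \(\sf{RA}\) and hence throughout \(\sf{RRA}\).

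The main obstacle is the cycle-theoretic reduction in part (2): extracting from a general \(\sf{RRA}\)-equation an existential witness condition in which the candidate witnesses use pairwise disjoint cycles is nontrivial and is where the hypothesis that \(E\) holds in \(\sf{RRA}\) genuinely enters the argument, while part (1) is a clean counting exercise once one sees that each moved atom participates in \(\Theta(n^{2})\) cycles.
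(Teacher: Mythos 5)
The paper itself gives no proof of Proposition~\ref{firasumm}: it is imported as a black box from Maddux \cite{fira}, so there is nothing in-paper to compare against and your proposal has to be judged on its own merits.

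Your part (1) is essentially correct and is the standard (indeed Maddux's) argument: for fixed \(e\) and involution \(f\) the integral structures correspond to free and independent choices on the \(Q(n,s)\) diversity cycles, a non-identity automorphism must fix \(e\), commute with \(f\), and act on the diversity cycles, and the number of structures it preserves is \(2\) to the number of its cycle-orbits. One wording fix: what you need is not that a moved atom \emph{lies in} \(\Theta(n^2)\) diversity cycles but that \(\Theta(n^2)\) of those cycles are actually \emph{moved} (only \(O(n)\) of the cycles through a moved atom can be fixed setwise, since fixing \([a,b,c]\) with \(\pi(a)\neq a\) forces one of \(b,c\) into a bounded set); with that, \(O(\pi)\le Q(n,s)-cn^2\) and the union bound \(n!\,2^{-cn^2}\to 0\) goes through, uniformly in \((e,f)\).

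Part (2) has a genuine gap, and it is exactly the step you yourself flag as ``the main obstacle.'' An arbitrary equation valid in \(\sf{RRA}\) does \emph{not} translate at the level of the atom structure into ``an equivalence between two existential cycle-theoretic conditions with bounded witnesses'': complementation in a relation-algebra term introduces negation (hence universal quantification over atoms), and nesting composition with complementation produces arbitrary quantifier alternations in the translated sentence. Your first-moment computation only establishes that bounded existential extension conditions hold almost surely --- which is essentially the calculation this paper performs later to show \(A_{\sf{SI}}\subseteq S_{\sf{SI}}\) --- and that does suffice for the special case of associativity, where both sides of the atom-level condition genuinely are bounded existentials; but it does not touch a general member of the equational theory of \(\sf{RRA}\), and as written the hypothesis that \(E\) holds in \(\sf{RRA}\) is never actually used. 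Maddux's proof closes this gap with two ingredients you would need to supply: a decreasing chain of varieties approximating \(\sf{RRA}\) (defined via \(n\)-dimensional relational bases, equivalently via reducts of \(n\)-dimensional cylindric algebras) together with an ultraproduct/compactness argument showing that every equation valid in \(\sf{RRA}\) already holds in the \(n\)-th approximating variety for some finite \(n\); and a proof that the almost-sure extension properties allow one to extend \(n\times n\) matrices of atoms step by step, so that almost every finite integral structure yields an algebra in each fixed approximating variety. Without something playing that role, the reduction to ``a missing witness'' is an assertion, not an argument.
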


\begin{proposition}\label{asymp}
For all \(n,s \in \bb{N}\) with \(s \le n\) and \(n-s\) even, let \(F(n,s)\) be the number of isomorphism classes of \(n\)-atom integral relation algebras with \(s\) atoms satisfying \(x \b = x\). Then \(2^{Q(n,s)}/P(n,s)\) is an asymptotic formula for \(F(n,s)\), in the sense that, for all \(\varepsilon > 0\), there exists \(N\in \bb{N}\) such that if \(n,s \in \bb{N}\) with \(n > N\), \(s \le n\), and \(n-s\) even, then
\[
\Bigg| 1 - \frac{F(n,s)P(n,s)}{2^{Q(n,s)}} \Bigg| < \varepsilon.
\]
Further, the same statement holds for nonassociative relation algebras in which \(e\) is an atom.
\end{proposition}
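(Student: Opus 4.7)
The plan is to count labelled members of $\sf{FSIAS}_e$ over a fixed base and then pass to isomorphism classes using Maddux's rigidity result (Proposition~\ref{firasumm}(1)). Fix an $n$-element set $U$, a point $e \in U$, and an involution $f$ on $U$ with $f(e)=e$ and exactly $s$ fixed points; any two such bases are isomorphic. By Proposition~\ref{asprop}(2), a $\{f,e,T\}$-expansion of $\inn{U;f,\{e\}}$ belongs to $\sf{FSIAS}_e$ iff $T$ is a union of cycles and every identity cycle $[a,e,a]$ is consistent. The identity cycles are thus forced, while each of the $Q(n,s)$ diversity cycles supplied by Proposition~\ref{cyclecount}(5) can independently be consistent or forbidden, giving exactly $2^{Q(n,s)}$ labelled structures over this base.

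Two such structures yield isomorphic members of $\sf{FSIAS}_e$ iff they lie in the same orbit of the group $G \deq \operatorname{Aut}\inn{U;f,\{e\}}$, which has order $P(n,s)$ by Proposition~\ref{cyclecount}(6); since every $\sf{FSIAS}_e$-automorphism preserves $f$ and $e$, the $G$-stabilizer of $\bf{U}$ coincides with $\operatorname{Aut}(\bf{U})$. A rigid $\bf{U}$ thus has orbit of full size $P(n,s)$, while a non-rigid one has orbit of size at most $P(n,s)/2$. Writing $N(n,s)$ for the number of non-rigid labelled structures over this base, and $F_{\sf{NA}}(n,s)$ for the number of isomorphism classes of size-$n$ members of $\sf{FSIAS}_e$ with $s$ fixed points (equivalently, by Proposition~\ref{atcm}, of symmetric integral size-$n$ members of $\sf{NA}$ in which $e$ is an atom with $s$ symmetric atoms), one obtains
\[
\frac{2^{Q(n,s)} - N(n,s)}{P(n,s)} \le F_{\sf{NA}}(n,s) \le \frac{2^{Q(n,s)} - N(n,s)}{P(n,s)} + \frac{N(n,s)}{2}.
\]
Combined with the key bound $N(n,s) \cdot P(n,s) = o(2^{Q(n,s)})$ uniformly in admissible $s$, which follows from the estimates underlying Proposition~\ref{firasumm}(1), this gives $F_{\sf{NA}}(n,s) \sim 2^{Q(n,s)}/P(n,s)$ uniformly, which is the ``further'' clause.

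For the relation algebra statement, apply Proposition~\ref{firasumm}(2) with $E$ the associative law: almost all members of $\sf{NA}$ in which $e$ is an atom are associative. The same reasoning localises this to the symmetric integral stratum, so $F(n,s)/F_{\sf{NA}}(n,s) \to 1$ uniformly in $s$ and $F(n,s) \sim 2^{Q(n,s)}/P(n,s)$ as well. The main technical obstacle is obtaining uniformity in $s$: Proposition~\ref{firasumm} delivers only ``almost all'' statements, whereas the formula demands convergence rates controlled uniformly across all $(n,s)$ with $s \le n$ and $n - s$ even. This reduces to checking that Maddux's bounds in \cite{fira} on the fraction of random labelled structures fixed by any given non-identity element of $G$ decay at least as quickly as $P(n,s)/2^{Q(n,s)}$, uniformly in $s$.
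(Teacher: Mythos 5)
This proposition is imported verbatim from Maddux \cite{fira}; the paper gives no proof of it, so there is nothing internal to compare your argument against. Your outline is essentially Maddux's own strategy: fix a base \(\inn{U;f,\{e\}}\), observe that the labelled structures over it are exactly the \(2^{Q(n,s)}\) choices of consistent diversity cycles, and pass to isomorphism classes by counting orbits under the \(P(n,s)\)-element automorphism group of the base, using rigidity to control the error. That is the right route, and the skeleton (orbit--stabilizer, the sandwich on \(F\), reducing the associative case to the nonassociative one) is sound.

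Two concrete problems. First, the class you name is wrong: members of \(\sf{FSIAS}_e\) satisfy \(f(x)\approx x\), so every atom is symmetric and the only admissible parameter is \(s=n\); the structures you actually need are the integral atom structures with \(e\) a constant and \(f\) an arbitrary involution with \(s\) fixed points --- the two-parameter family is the whole point of the statement. Relatedly, your upper bound \(\tfrac{N}{2}\) on the number of non-rigid orbits is unjustified (a structure fixed by all of \(G\) has orbit size \(1\), so the correct crude bound is \(N\)), though this is harmless once \(N\cdot P=o(2^{Q})\) is available. Second, and more seriously, both quantitative inputs are asserted rather than proved: Proposition~\ref{firasumm}(1) as stated gives only \(N/2^{Q}\to 0\), not the stronger \(N\cdot P/2^{Q}\to 0\) uniformly in \(s\) that your sandwich requires, and Proposition~\ref{firasumm}(2) is an unstratified ``almost all'' statement that does not by itself yield \(F(n,s)/F_{\sf{NA}}(n,s)\to 1\) uniformly over the \((n,s)\) strata. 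You correctly flag both points, but they are the entire analytic content of the proposition; a complete proof must carry out the estimate \(\sum_{1\ne g\in G}|\mathrm{Fix}(g)|\cdot P(n,s)=o(2^{Q(n,s)})\) (each non-identity \(g\) moves on the order of \(n^{2}\) diversity cycles, so \(|\mathrm{Fix}(g)|\le 2^{Q(n,s)-cn^{2}}\), while \(P(n,s)^{2}\le (n!)^{2}\) grows only like \(2^{O(n\log n)}\)), together with the analogous stratified bound for associativity, rather than defer them to ``the estimates underlying'' the cited results.
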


We conclude with a reminder of  Fra{\"i}ss{\'e} limits, which were defined by Roland Fra{\"i}ss{\'e} in \cite{fraisse}. We will mostly follow Hodges \cite{lsmt}. Firstly, we recall some definitions.

\begin{definition}[Age]
Let \(\bf{A}\) be a structure. The \emph{age of \(\bf{A}\)} is the class of all finitely generated structures that embed into \(\bf{A}\).
\end{definition}

\begin{definition}[HP, JEP, and AP]
Let \(\c{K}\) be a class of similar structures. We say that \(\c{K}\) has the \emph{hereditary property} (HP) iff \(\c{K}\) is closed under forming finitely generated structures. We say that \(\c{K}\) has the \emph{joint embedding property} (JEP)  iff, for all \(\bf{A}, \bf{B} \in \c{K}\), there is some \(\bf{C} \in \sc{K}\) that both \(\bf{A}\) and \(\bf{B}\) embed into. We say that \(\c{K}\) has the \emph{amalgamation property} (AP)  iff, for all \(\bf{A},\bf{B},\bf{C} \in \c{K}\) and embeddings \(\mu \colon \bf{A} \to \bf{B}\) and \(\nu \colon \bf{A} \to \bf{C}\), there is some \(\bf{D} \in \c{K}\) and embeddings \(\mu' \colon \bf{B} \to \bf{D}\) and \(\nu' \colon \bf{C} \to \bf{D}\) such that \(\mu' \circ \mu = \nu' \circ \nu\).
\end{definition}

\begin{definition}[Homogeneity]
Let \(\bf{A}\) be a structure. We call \(\bf{A}\)  \emph{ultrahomogenous} iff every isomorphism between finitely generated substructures of \(\bf{A}\) extends to an automorphism of \(\bf{A}\). We call \(\bf{A}\) \emph{weakly homogeneous} iff, for all finitely generated structures \(\bf{B}\) and \(\bf{C}\) of \(\bf{A}\) with \(\bf{B} \le \bf{C}\) and all embeddings \(\mu \colon \bf{B} \to \bf{A}\), there is an embedding \(\nu \colon \bf{C} \to \bf{A}\) extending \(\mu\).
\end{definition}

The following result shows that these definitions coincide

\begin{proposition}\label{homogequiv}
A finite or countable structure is ultrahomogeneous if and only if it is weakly homogeneous.
\end{proposition}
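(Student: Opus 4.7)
The plan is to split the biconditional and handle each direction separately. The forward direction is essentially immediate: if $\bf{A}$ is ultrahomogeneous and we are given finitely generated substructures $\bf{B} \le \bf{C}$ of $\bf{A}$ together with an embedding $\mu \colon \bf{B} \to \bf{A}$, then $\mu$ is an isomorphism from $\bf{B}$ onto the finitely generated substructure $\mu(\bf{B})$ of $\bf{A}$. Ultrahomogeneity extends $\mu$ to an automorphism $\alpha$ of $\bf{A}$, and the restriction $\alpha|_C$ is the required extension of $\mu$ to an embedding of $\bf{C}$ into $\bf{A}$.

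The substantive direction is the converse, where the strategy is a standard back-and-forth construction in the countable case. Fix a weakly homogeneous countable $\bf{A}$, enumerate its universe as $A = \{a_0, a_1, \dots\}$, and let $\phi \colon \bf{B} \to \bf{B}'$ be an isomorphism between finitely generated substructures of $\bf{A}$. I would build an ascending chain of finitely generated substructures $\bf{B}_n$ and $\bf{B}_n'$ with isomorphisms $\phi_n \colon \bf{B}_n \to \bf{B}_n'$, all extending $\phi$, alternating between two kinds of steps. At a ``forth'' step, to force $a_n$ into the domain, let $\bf{C}$ be the substructure generated by $B_n \cup \{a_n\}$; weak homogeneity applied to the embedding $\phi_n \colon \bf{B}_n \to \bf{A}$ and the inclusion $\bf{B}_n \le \bf{C}$ yields an extension $\nu \colon \bf{C} \to \bf{A}$, and I take $\phi_{n+1} \deq \nu$ with range $\bf{B}_{n+1}' \deq \nu(\bf{C})$. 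A ``back'' step is symmetric: apply weak homogeneity to $\phi_n^{-1} \colon \bf{B}_n' \to \bf{A}$ to enlarge the range to include $a_n$. The union $\bigcup_n \phi_n$ is then a bijection $A \to A$ that preserves all structure, hence an automorphism of $\bf{A}$ extending $\phi$.

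For the finite case, only the ``forth'' half of the construction is needed: iterating weak homogeneity finitely many times produces an embedding of all of $\bf{A}$ into $\bf{A}$ extending $\phi$, and since $|A|$ is finite, any such embedding is automatically surjective and is therefore an automorphism. Alternatively, one could view a finite structure as a special case of a countable one and reuse the back-and-forth verbatim, since the construction stabilises once every element has been captured on both sides.

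There is no serious obstacle in this argument; the only point requiring a little care is the legitimacy of the ``back'' step, which relies on the observation that $\phi_n^{-1}$ is itself an embedding of the finitely generated substructure $\bf{B}_n'$ into $\bf{A}$, so weak homogeneity applies to it just as it does to $\phi_n$. The bookkeeping that ensures every $a_n$ is ultimately absorbed into both the domain and the range is routine once the two alternating steps are set up correctly.
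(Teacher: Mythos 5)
Your proof is correct. The paper gives no proof of this proposition at all --- it is quoted as a known background result from Hodges' \emph{A shorter model theory} --- and your argument (the immediate restriction argument for the forward direction, the standard back-and-forth construction for the converse in the countable case, and the observation that an injective endomorphism of a finite structure is surjective so that only the ``forth'' half is needed there) is exactly the standard textbook proof of that cited result.
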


The main result on these structures is the following existence and uniqueness result, known as Fra{\"i}ss{\'e}'s Theorem.

\begin{proposition}[Fra{\"i}ss{\'e}'s Theorem]\label{fraisse}
Let \(S\) be a countable signature and let \(\c{K}\) be a class of at most countable \(S\)-structures, that has the \textup{HP}, \textup{JEP}, and \textup{AP}. Then there is an \(S\)-structure \(\bf{F}\) (called a \emph{Fra{\"i}ss{\'e} limit of \(\c{K}\)}), unique up to isomorphism, such that \(\bf{F}\) is at most countable, \(\c{K}\) is the age of \(\bf{F}\), and \(\bf{F}\) is ultrahomogeneous.
\end{proposition}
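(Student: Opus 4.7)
My plan is to establish existence by a stepwise amalgamation construction and uniqueness by a back-and-forth argument.

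For existence, I would build an ascending chain $\bf{A}_0 \le \bf{A}_1 \le \cdots$ of members of $\c{K}$ and take $\bf{F} \deq \bigcup_{n \in \bb{N}} \bf{A}_n$. During the construction two kinds of ``tasks'' must be handled: (i) for each isomorphism type of $\bf{B} \in \c{K}$, ensure $\bf{B}$ embeds into some $\bf{A}_n$; and (ii) for each triple $(\bf{B}, \bf{C}, \mu)$ with $\bf{B} \le \bf{C}$ both in $\c{K}$ and $\mu \colon \bf{B} \to \bf{A}_n$ an embedding, ensure that some later $\bf{A}_m$ contains an extension of $\mu$ to an embedding of $\bf{C}$. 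Since $S$ is countable and the members of $\c{K}$ are at most countable, there are countably many tasks of each type, so they can be enumerated and one addressed at each stage: task~(i) by applying JEP to $\bf{A}_n$ and $\bf{B}$, and task~(ii) by applying AP to the span $\bf{A}_n \xleftarrow{\mu} \bf{B} \hookrightarrow \bf{C}$. HP then ensures that the age of $\bf{F}$ is precisely $\c{K}$, the tasks of type~(i) ensure that every member of $\c{K}$ actually embeds into $\bf{F}$, and the tasks of type~(ii) make $\bf{F}$ weakly homogeneous. Proposition~\ref{homogequiv} upgrades weak homogeneity to ultrahomogeneity.

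For uniqueness, suppose $\bf{F}_1$ and $\bf{F}_2$ both satisfy the conclusion. Enumerating the universes $F_1 = \{a_1, a_2, \dots\}$ and $F_2 = \{b_1, b_2, \dots\}$, I would build a chain of finite partial isomorphisms $p_0 \subseteq p_1 \subseteq \cdots$ from $\bf{F}_1$ to $\bf{F}_2$ by the standard back-and-forth: at odd steps, to extend $p_n$ so that some $a_k \notin \mathrm{dom}(p_n)$ enters the domain, consider the substructure of $\bf{F}_1$ generated by $\mathrm{dom}(p_n) \cup \{a_k\}$---which lies in $\c{K}$ since $\c{K}$ is the age of $\bf{F}_1$---and apply weak homogeneity of $\bf{F}_2$ along $p_n$ to extend; the even steps are symmetric. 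The union $\bigcup_n p_n$ is then the desired isomorphism $\bf{F}_1 \to \bf{F}_2$.

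The hard part is the bookkeeping in the existence construction: one must enumerate the countably many tasks so that each one is eventually treated, and at every stage verify that the relevant JEP or AP application produces a structure of $\c{K}$ that properly extends $\bf{A}_n$ after $\bf{A}_n$ is identified with its image under the amalgamating embedding. With this in place, the remaining verifications---that $\c{K}$ is the age of $\bf{F}$, that $\bf{F}$ is weakly (hence ultra-) homogeneous, and that the back-and-forth yields an isomorphism---are routine.
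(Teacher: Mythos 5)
The paper does not prove this proposition: it is quoted as a known result (Fra\"iss\'e's Theorem), with the proof deferred to the cited references, in particular Hodges \cite{lsmt}. Your sketch is essentially that standard proof --- existence via an $\omega$-chain of amalgamations with countable task bookkeeping, the age and weak homogeneity read off from the two task types, and uniqueness by back-and-forth --- and it is correct as an outline (the only cosmetic point being that the partial isomorphisms in the back-and-forth should be between finitely generated substructures rather than literally finite sets, which makes no difference here).
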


To state the last result of this section, we need to recall some definitions.

\begin{definition}[Uniform local finiteness]
Let \(\c{K}\) be a class of similar structures. We say that \(\c{K}\) is \emph{uniformly locally finite} iff there is a function \(f \colon \bb{N}\to \bb{N}\), such that, for all \(\bf{A} \in \c{K}\), each \(n \in \bb{N}\), and every subset \(S\) of \(A\) with \(|S| \le n\), the substructure of \(\bf{A}\) generated by \(S\) has cardinality at most \(f(n)\).
\end{definition}

\begin{proposition}\label{qeomega}
Let \(S\) be a finite signature, let \(\c{K}\) be a uniformly locally finite class of \(S\)-structures with the HP, JEP, and AP, and at most countably many isomorphism types of finitely generated \(S\)-structures, and let \(\bf{F}\) be a Fra{\"i}ss{\'e} limit of \(\c{K}\). Then the first-order theory of \(\bf{F}\) is \(\aleph_0\)-categorical and has quantifier elimination.
\end{proposition}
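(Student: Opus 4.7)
The plan is to deduce both conclusions from a single observation: in $\bf{F}$, the complete first-order type of any tuple over $\varnothing$ is determined by its quantifier-free type, and there are only finitely many such quantifier-free types in each arity. With this in hand, $\aleph_0$-categoricity will follow from the Ryll--Nardzewski theorem, and quantifier elimination will follow by rewriting each formula as a finite disjunction of quantifier-free formulas.

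First, I would show that for every $n \in \bb{N}$ there are only finitely many isomorphism types of substructures of $\bf{F}$ generated by $n$ elements. Uniform local finiteness bounds the cardinality of any such substructure by $f(n)$, and since $S$ is finite there are only finitely many $S$-structures (up to isomorphism) whose universe has size at most $f(n)$. The quantifier-free type of an $n$-tuple $\bar{a}$ in $\bf{F}$ records which atomic formulas in $x_1, \dots, x_n$ hold of $\bar{a}$, and hence both determines and is determined by the substructure generated by $\bar{a}$ together with the designation of $\bar{a}$ as a generating tuple. So there are only finitely many quantifier-free $n$-types over $\varnothing$ realised in $\bf{F}$. By Proposition \ref{fraisse}, $\bf{F}$ is ultrahomogeneous, so if $\bar{a}$ and $\bar{b}$ have the same quantifier-free type then the induced isomorphism between the substructures they generate extends to an automorphism of $\bf{F}$, making $\bar{a}$ and $\bar{b}$ indistinguishable by first-order formulas. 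Consequently there are only finitely many complete $n$-types of $\Th(\bf{F})$ for each $n$, and the Ryll--Nardzewski theorem yields $\aleph_0$-categoricity.

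For quantifier elimination, the same correspondence does the work. Given a formula $\varphi(\bar{x})$, the set of $n$-tuples of $\bf{F}$ satisfying $\varphi$ is a union of complete-type classes, and each such class is a union of quantifier-free-type classes; since there are only finitely many quantifier-free $n$-types, $\varphi$ is equivalent in $\bf{F}$ to a finite disjunction of quantifier-free formulas, each isolating one such class. Completeness of $\Th(\bf{F})$, which is automatic since it is the theory of a single structure, transfers this equivalence to all its models. I do not anticipate a genuine obstacle; the only points requiring care are that uniform local finiteness together with the finiteness of $S$ yields the finiteness of the set of quantifier-free $n$-types, and that ultrahomogeneity promotes equality of quantifier-free types to equality of full types.
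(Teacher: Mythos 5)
This proposition is stated in the paper as a background result and quoted without proof (it is the uniformly-locally-finite Fra\"iss\'e theorem from Hodges's \emph{A Shorter Model Theory}, which the paper cites), and your argument is exactly the standard proof of that result: uniform local finiteness together with the finiteness of \(S\) gives finitely many quantifier-free \(n\)-types, each axiomatised by a single quantifier-free formula; ultrahomogeneity promotes equality of quantifier-free types to conjugacy under automorphisms and hence to equality of complete types; and Ryll--Nardzewski plus the finite-disjunction argument then yield both conclusions. The only step worth making explicit is the passage from ``finitely many complete \(n\)-types realised in \(\bf{F}\)'' to ``finitely many complete \(n\)-types of \(\Th(\bf{F})\)'', which holds because \(\Th(\bf{F})\) proves that the finitely many quantifier-free formulas isolating the realised types exhaust all \(n\)-tuples, so no further types are consistent with the theory.
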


\section{Main results}\label{mains}

First, we show that almost all finite nonassociative relation algebras are symmetric and have \(e\) as an atom. The proof and the observation that almost all nonassociative relation algebras in which \(e\) as an atom are symmetric were discovered independently by the author, but this result was also conjectured by Roger Maddux in a private communication. 

\begin{theorem}\label{aasi}
Almost all members of \(\sf{FAS}\) are in \(\sf{FSIAS}\).
\end{theorem}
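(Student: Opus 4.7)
The plan is to establish $\prl(\sf{FSIAS}, \sf{FAS}) = 1$ directly by counting labelled structures, and then upgrade to $\pru(\sf{FSIAS}, \sf{FAS}) = 1$ via Proposition~\ref{rigid}. By Proposition~\ref{asprop}, a labelled $\sf{FAS}$-structure on $\{1,\dots,n\}$ is determined by an involution $f$ (with, say, $s$ fixed points), an $f$-closed subset $I$ of size $k$, and a choice of which cycles are consistent (subject to (I)). Letting $N(n, s, k)$ denote the count of these, Proposition~\ref{cyclecount} together with Proposition~\ref{asprop}(1)(c) gives $N(n, n, 1) = n \cdot 2^{Q(n, n)}$, since $f = \id$ is unique, $I$ admits $n$ choices, identity cycles are forced consistent, and each of the $Q(n, n) = n(n-1)(n+1)/6$ diversity cycles is an independent consistent/forbidden decision. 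The goal is then $\sum_{(s, k) \ne (n, 1)} N(n, s, k) = o(N(n, n, 1))$.

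For the non-symmetric contribution (fixing $k = 1$ and writing $s = n - 2r$ for $r \ge 1$), $Q(n, n) - Q(n, s) = (n-1)r$ and $\binom{n}{2r}(2r-1)!! \le n^{2r}/(2^r r!)$, so
\[
\frac{N(n, s, 1)}{N(n, n, 1)} \;\le\; \binom{n}{2r}(2r-1)!! \cdot 2^{-(n-1)r} \;\le\; \frac{1}{r!}\Bigl(\frac{n^2}{2^n}\Bigr)^{r},
\]
whose sum over $r \ge 1$ is $o(1)$. For the non-integral contribution (arbitrary $s$, any $k \ge 2$), the key observation is that axiom (I) severely prunes the identity cycles: going through the six Peircean transforms of $(a,b,c)$, one finds that a cycle $[a, b, c]$ intersecting the $f$-closed set $I$ can be potentially consistent only if it takes one of the three shapes $[a, b, b]$ with $a \in I$, $[a, b, a]$ with $b \in I$, or $[a, f(a), c]$ with $c \in I$, giving at most $3kn$ such cycles. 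Hence the identity-cycle freedom is at most $2^{3kn}$, while the diversity cycles drop by at least $\binom{n}{2}(k - 1) + O(kn)$ relative to $k = 1$. With $\binom{n}{k} \le n^{k}$ choices of $I$,
\[
\frac{N(n, s, k)}{N(n, s, 1)} \;\le\; n^{k} \cdot 2^{3kn} \cdot 2^{-\binom{n}{2}(k-1) + O(kn)},
\]
which vanishes uniformly in $s$ and sums to $o(1)$ over $k \ge 2$.

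Combining, $\prl(\sf{FSIAS}, \sf{FAS}) = 1$. Proposition~\ref{firasumm}(1) asserts that almost all labelled integral members of $\sf{FAS}$ are rigid; combined with the integrality just proved, this yields $\prl(R, \sf{FAS}) = 1$, so Proposition~\ref{rigid} implies $\pru(\sf{FSIAS}, \sf{FAS}) = \prl(\sf{FSIAS}, \sf{FAS}) = 1$, as required. The chief obstacle is precisely the identity-cycle bookkeeping: a naive count admits $\Theta(k n^2)$ identity cycles, which would swamp the $2^{\binom{n}{2}(k - 1)}$ diversity-cycle loss; what saves us is the (I)-induced rigidity that each Peircean transform with middle in $I$ must have equal outer entries, collapsing the candidate cycles down to $O(kn)$.
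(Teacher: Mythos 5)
Your proposal follows essentially the same route as the paper: decompose the labelled count by the number of identity atoms and the shape of the involution, observe that the symmetric integral structures contribute exactly \(n\cdot 2^{Q(n,n)}\), bound the freedom in choosing identity cycles and the deficit in diversity cycles for the remaining cases, and transfer from labelled to unlabelled probabilities via Proposition~\ref{firasumm}(1) and Proposition~\ref{rigid}. Two of your refinements are genuinely cleaner than the paper's: the exact involution count \(\binom{n}{2r}(2r-1)!!\) turns the non-symmetric contribution into the tail of \(e^{n^2/2^n}-1\) rather than a geometric sum, and your classification of the potentially consistent identity cycles (all of which, note, coincide with cycles of the form \([x,i,x]\) with \(i\in I\)) is a transparent way to see why the identity-cycle freedom is \(2^{O(kn)}\) rather than \(2^{\Theta(kn^2)}\); the paper gets the sharper bound \((2^k-1)^n\) directly from Proposition~\ref{asprop}(1)(b).

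However, one stated inequality is false: the diversity-cycle deficit is \emph{not} at least \(\binom{n}{2}(k-1)+O(kn)\). Writing \(S(m)\deq Q(m,m)=(m^3-m)/6\), the deficit in the worst (fully symmetric) case is
\[
S(n)-S(n-k+1)=\tbinom{n}{2}(k-1)-\tfrac{1}{6}(k-1)(k-2)(3n-k),
\]
and the correction term is \(\Theta(k^2n)\), not \(O(kn)\); for \(k\) comparable to \(n\) the true deficit is about \(n^3/6\) against your claimed \(n^3/2\). The conclusion of the step survives, since the deficit equals \(\tfrac{1}{2}(k-1)n(n-k+1)+\tfrac{1}{6}((k-1)^3-(k-1))\) and hence still dominates \(k\log_2 n+3kn\) uniformly (the first term handles moderate \(k\), the cubic term handles \(k\) near \(n\)), but as written the estimate needs this repair before the sum over \(k\ge 2\) can be declared \(o(1)\). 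For comparison, the paper sidesteps the issue by treating \(i=2,3\) separately, using the \(i=4\) deficit \(\tfrac32 n^2-\tfrac92 n+4\) uniformly for all \(i\ge 4\), and absorbing the accumulated identity-cycle gain \(\sum_i 2^{in}\le (2^{n(n+1)}-1)/(2^n-1)\) with a geometric sum.
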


\begin{proof}
Let \(n \ge 5\), let \(U\) be an \(n\)-element set, and let \(1 \le i < n\). Clearly, there are \(\binom{n}{i}\) ways~to~select \(i\) identity atoms. Now, let \(0 \le p \le \lfloor (n-i)/2 \rfloor\). There are at most  \(\binom{n-i}{2}^p\) involutions of \(U\) with \(p\) non-fixed pairs, i.e., with sets of the form \(\{u, f(u)\}\) with \(u \neq f(u)\), since \({\binom{n-i}{2}}^p\) is the number of \(p\) \emph{independent} selections of \(2\)-element sets of diversity atoms. Based on Proposition \ref{asprop}. 1. (b), there are \((2^i - 1)^n\) possible ways of selecting identity cycles, since each element of \(U\) must appear in at least one of the \(i\) cycles of the given form. Lastly, based on Proposition \ref{cyclecount}. 5., there are \(2^{Q(n-i+1,n-i+1-2p)}\) ways to select diversity cycles; the number of choices of diversity cycles in a \((n-i+1)\)-element structure satisfying \(|I|=1\) and a \(n\)-element structures such that \(|I| = i\) and \(|U \setminus I| = n-i\)  is clearly the same. Hence, by Proposition~\ref{basicRA}(5), the fraction of members of \(\sf{FAS}\) with universe \(\{1,\dots,n\}\) that belong to \(\sf{FSIAS}\) is bounded below by
\begin{align*}
&\frac{n2^{Q(n,n)}}{\sum_{i=1}^{n} \sum_{p = 0}^{\lfloor (n-i)/2 \rfloor} \binom{n}{i} \binom{n-i}{2}^p (2^i - 1)^n 2^{Q(n-i+1,n-i+1-2p)}} \\
= & \frac{1}{(\sum_{i=1}^{n} \sum_{p = 0}^{\lfloor (n-i)/2 \rfloor} \binom{n}{i} \binom{n-i}{2}^p (2^i - 1)^n 2^{Q(n-i+1,n-i+1-2p)-Q(n,n)})/n}
\end{align*}

We have
\begin{align*}
& \quad\,\,  \frac{1}{n}\sum_{i=1}^{n} \sum_{p = 0}^{\lfloor (n-i)/2 \rfloor} \binom{n}{i} \binom{n-i}{2}^p (2^i - 1)^n 2^{Q(n-i+1,n-i+1-2p)-Q(n,n)} \\ 
& = \frac{1}{n} \sum_{p=0}^{\lfloor (n-1)/2 \rfloor} n \binom{n-1}{2}^p 2^{Q(n,n-2p)-Q(n,n)} \\
& \quad\,\, +  \frac{1}{n}\sum_{i=2}^{n-1} \sum_{p = 0}^{\lfloor (n-i)/2 \rfloor} \binom{n}{i} \binom{n-i}{2}^p (2^i - 1)^n 2^{Q(n-i+1,n-i+1-2p)-Q(n,n)}.
\end{align*}
Clearly,
\[
\binom{n-1}{2}^p \le \bigg(\frac{n^2}{2}\bigg)^p.
\]
Now,
\begin{align*}
Q(n,n-2p) - Q(n,n) &= \frac{1}{6}(n-1)((n-1)^2 + 3(n-2p) - 1) - \frac{1}{6}(n-1)((n-1)^2 + 3n -1) \\
&= \frac{1}{6}(n-1)((n-1)^2 +3n-6p-1 -((n-1)^2 +3n-1)) \\
&= \frac{1}{6}(n-1)(3n -6p - 3n) \\
&= (1-n)p,
\end{align*}
hence
\begin{align*}
\bigg( \frac{n^2}{2} \bigg)^p 2^{Q(n,n-2p)- Q(n,n)} &= \bigg( \frac{n^2}{2} \bigg)^p 2^{(1-n)p} \\
&= \bigg(\frac{n^2}{2^n}\bigg)^p.
\end{align*}
Since \(n \ge 5\), we have \(0 < n^2/2^n < 1\), so the formula for a geometric sum gives
\begin{align*}
\frac{1}{n} \sum_{p=0}^{\lfloor (n-1)/2 \rfloor} n \binom{n-1}{2}^p 2^{Q(n,n-2p)-Q(n,n)}&\le \sum_{p=0}^{\lfloor (n-1)/2 \rfloor} \bigg(\frac{n^2}{2^n}\bigg)^p \\
&= \frac{1 - (n^2/2^n)^{\lfloor (n-1)/2 \rfloor +1}}{1 - n^2/2^n}.
\end{align*}
Using basic limits, \(n^2/2^n\) and \((n^2/2^n)^{\lfloor (n-1)/2 \rfloor +1}\) tend to \(0\), and so
\[
 \lim_{n \to \infty} \bigg(\frac{1 - (n^2/2^n)^{\lfloor (n-1)/2 \rfloor +1}}{1 - n^2/2^n}\bigg) = 1.
\]

Define \(S(m) \deq Q(m,m)\), for each \(m \in \bb{N}\). We have
\begin{align*}
S(m)  &= \frac{1}{6}(m-1)((m-1)^2+3m-1) \\
&= \frac{1}{6}(m-1)(m^2-2m+1+3m-1)\\
&= \frac{1}{6}(m-1)(m^2+m) \\
&= \frac{1}{6}(m^3-m),
\end{align*}
for each \(m \in \bb{N}\). Using the formula for a difference of cubes, we get
\begin{align*}
S(n-i+1) - S(n) & = \frac{1}{6}((n-i+1)^3 - (n-i+1) - n^3+n)\\
& = \frac{1}{6}((n-i+1-n)((n-i+1)^2 + n(n-i+1) + n^2) + i - 1) \\
& = \frac{1}{6}(-(i-1)(n^2 -2(i-1)n + (i-1)^2 + n^2 - (i-1)n + n^2) + i-1)\\
& = \frac{1}{6}(-3(i-1)n^2 + 3(i-1)^2n - (i-1)^3 + i-1).
\end{align*}
In particular,
\begin{align*}
i = 2 & \implies S(n-i+1) - S(n) = - \frac{1}{2}n^2+\frac{1}{2}n, \\
i = 3 & \implies S(n-i+1) - S(n) = -n^2+2n-1, \\
i = 4 & \implies S(n-i+1) - S(n) = - \frac{3}{2}n^2+ \frac{9}{2}n -4.
\end{align*}
If \(1 \le i < n\) and \(1 \le p \le \lfloor (n-i)/2 \rfloor\), then \(\lfloor (n-i)/2 \rfloor \le n\),  \(\binom{n}{i} \le n^n\), \(\binom{n-i}{2}^p \le (n^2)^n = n^{2n}\), \((2^i-1)^n \le 2^{in}\), and \(Q(n-i+1,n-i+1-2p) \le S(n-i+1)\). Over the interval \([1,\infty)\), \(x \mapsto (x^3-x)/6\) is increasing, hence \(S(n-i+1)-S(n)\) is maximised when \(i\) is minimised. Since \(n \ge 5\), we have \(n-1 \ge 4\) and \(2^n > 1\). Hence, using the formula for a geometric sum, we get
\begin{align*}
& \quad \,\, \frac{1}{n}\sum_{i=2}^{n} \sum_{p = 0}^{\lfloor (n-i)/2 \rfloor} \binom{n}{i} \binom{n-i}{2}^p (2^i - 1)^n 2^{Q(n-i+1,n-i+1-2p)-Q(n,n)} \\
& \le \frac{1}{n} \sum_{i=2}^{n} \sum_{p=0}^{\lfloor (n-i)/2 \rfloor} n^n  n^{2n}  (2^i-1)^n 2^{S(n-i+1)-S(n)} \\
& \le \frac{1}{n} \sum_{i=2}^{n} n  n^{3n}  (2^i-1)^n 2^{S(n-i+1)-S(n)} \\
& = \sum_{i=2}^{n} n^{3n} (2^i-1)^n 2^{S(n-i+1)-S(n)} \\
& = n^{3n} 3^{n} 2^{-n^2/2+n/2} + n^{3n}7^{n} 2^{-n^2 + 2n -1} + \sum_{i=4}^{n} n^{3n} (2^i-1)^n 2^{S(n-i+1)-S(n)} \\
& \le n^{3n} 3^{n} 2^{-n^2/2+n/2} + n^{3n}7^{n} 2^{-n^2 + 2n -1} + \sum_{i=4}^{n} n^{3n} 2^{in} 2^{-3n^2/2+9n/2-4} \\
& \le n^{3n} 3^{n} 2^{-n^2/2+n/2} + n^{3n}7^{n} 2^{-n^2 + 2n -1} + n^{3n}2^{-3n^2/2+9n/2-4} \sum_{i=0}^{n} (2^n)^i \\ 
& = n^{3n} 3^{n} 2^{-n^2/2+n/2} + n^{3n}7^{n} 2^{-n^2 + 2n -1} + n^{3n}2^{-3n^2/2+9n/2-4} \frac{2^{n(n+1)}-1}{2^n-1}.
\end{align*}
We have
\[
 n^{3n} 3^{n} 2^{-n^2/2+n/2} = 2^{3n\log_2(n) + \log_2(3)n-n^2/2+n/2},
\]
which clearly tends to \(0\). Similarly,
\[
n^{3n} 7^n  2^{-n^2+2n+1} = 2^{3n\log_2(n)+\log_2(7)n -n^2+2n-1},
\]
which tends to \(0\). Lastly,
\begin{align*}
n^{3n}2^{-3n^2/2+9n/2-4} \frac{2^{n(n+1)}-1}{2^n-1} &= 2^{3n\log_2(n) - n^2/2+11n/2-4} \frac{2^{-n^2-n}(2^{n^2+n}-1)}{2^n-1} \\
& = 2^{3n\log_2(n)-n^2/2 +11n/2 -4} \frac{1-2^{-n^2-n}}{2^{n}-1}.
\end{align*}
Now, it is clear that \(2^{3n\log_2(n)-n^2/2+9n/2-4}\) tends to \(0\) and \((1-2^{-n^2-n})/(2^n-1)\) tends to \(0\), hence the term above has limit \(0\). Combining these results with basic limits, we get
\[
\lim_{n\to \infty} \Bigg( \frac{1}{n}\sum_{i=1}^{n} \sum_{p = 1}^{\lfloor (n-i)/2 \rfloor} \binom{n}{i} \binom{n-i}{2}^p (2^i - 1)^n 2^{Q(n-i+1,n-i+1-2p)-Q(n,n)}\Bigg) \le 1,
\]
and so
\[
\lim_{n \to \infty} \Bigg(\frac{1}{(\sum_{i=1}^{n} \sum_{p = 1}^{\lfloor (n-i)/2 \rfloor} \binom{n}{i} \binom{n-i}{2}^p (2^i - 1)^n 2^{Q(n-i+1,n-i+1-2p)-Q(n,n)})/n}\Bigg) \ge 1.
\]
By the Squeeze Principle, the fraction of members of \(\sf{FAS}\) with universe \(\{1,\dots,n\}\) in \(\sf{FSIAS}\) tends to \(1\). Combining these results, Proposition~\ref{rigid}, Proposition~\ref{atcm}, and Proposition~\ref{firasumm}(1), we find that almost all members of \(\sf{FAS}\) belong to \(\sf{FSIAS}\), which is what we wanted.
\end{proof}

Combining this with Proposition \ref{atcm} and Proposition \ref{firasumm}. 2., we obtain the following.

\begin{corollary}\label{symint}
Almost all finite nonassociative relation algebras are symmetric integral relation algebras.
\end{corollary}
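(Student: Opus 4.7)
The plan is to stitch together Theorem~\ref{aasi}, the bijection supplied by Proposition~\ref{atcm}, and Proposition~\ref{firasumm}(2). First I would translate Theorem~\ref{aasi} from atom structures to algebras: by Proposition~\ref{atcm}(1) the isomorphism classes of finite members of \(\sf{NA}\) correspond bijectively to those of \(\sf{FAS}\), and a finite \(\bf{A}\in\sf{NA}\) is symmetric with \(e\) as an atom exactly when \(\At(\bf{A})\in\sf{FSIAS}\). Since the enumerations on either side of the bijection agree, Theorem~\ref{aasi} immediately gives that almost all finite members of \(\sf{NA}\) are symmetric and have \(e\) as an atom.

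Next I would invoke Proposition~\ref{firasumm}(2), which says that almost all finite members of \(\sf{NA}\) in which \(e\) is an atom are in \(\sf{RA}\). Because the subclass with \(e\) an atom already has unlabelled density \(1\) in finite \(\sf{NA}\) by the previous step, a routine density argument shows that the conjunction of ``\(e\) is an atom'' and ``\(\bf{A}\in\sf{RA}\)'' also holds in almost all finite members of \(\sf{NA}\). Intersecting this with symmetry, we find that almost all finite members of \(\sf{NA}\) are symmetric relation algebras in which \(e\) is an atom.

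Finally, for \(\bf{A}\in\sf{RA}\) the condition ``\(e\) is an atom'' is equivalent to ``\(\bf{A}\) is integral,'' as noted in the paragraph following the definition of \(\sf{FSIAS}\). This upgrades the previous conclusion to the statement of the corollary. There is no serious obstacle; the only point requiring care is distinguishing the algebraic notion of ``integral'' (the one used in the corollary) from the atom-structure notion (\(|I|=1\)), since the two are known to coincide only once associativity has been secured via Proposition~\ref{firasumm}(2).
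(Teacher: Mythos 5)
Your proposal is correct and follows exactly the route the paper takes: the paper derives this corollary by combining Theorem~\ref{aasi}, Proposition~\ref{atcm}, and Proposition~\ref{firasumm}(2), which is precisely your argument. Your additional care in distinguishing the algebraic notion of integrality from the atom-structure condition \(|I|=1\), and in noting that the two coincide only after associativity is secured, matches the remark the paper makes immediately after the definition of \(\sf{FSIAS}\).
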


Using Proposition \ref{asprop} and Proposition \ref{asymp}, we obtain the following. 

\begin{corollary}
\(2^{Q(n,n)}/(n-1)!\) is an asymptotic formula for the number of \(n\)-atom nonassociative relation algebras. The same formula holds if we add the assumption of associativity, symmetry, or \(e\) being an atom.
\end{corollary}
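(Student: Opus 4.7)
The plan is to combine Corollary \ref{symint}, Proposition \ref{asymp} (with $s = n$), and a direct evaluation of $P(n,n)$. Write $N(n)$ for the number of isomorphism classes of $n$-atom nonassociative relation algebras, and let $F(n,n)$ be the quantity introduced in Proposition \ref{asymp}, so that $F(n,n)$ counts the isomorphism classes of $n$-atom symmetric integral relation algebras. The first step is to observe that Corollary \ref{symint}, interpreted on isomorphism classes, yields $F(n,n)/N(n) \to 1$ as $n \to \infty$, which gives $N(n) \sim F(n,n)$.

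The second step is the one-line evaluation $P(n,n) = (n-1)! \cdot 0! \cdot 2^0 = (n-1)!$, which follows from Proposition \ref{cyclecount}(6) because $(n-s)/2 = 0$ when $s = n$. Combining this with Proposition \ref{asymp} gives $F(n,n) \sim 2^{Q(n,n)}/(n-1)!$, and hence $N(n) \sim 2^{Q(n,n)}/(n-1)!$, which establishes the principal formula.

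For the supplementary sentence, I appeal to Corollary \ref{symint} once more: almost all finite nonassociative relation algebras are simultaneously associative (so lie in $\sf{RA}$), symmetric, and have $e$ as an atom (since integrality in the atom structure forces $|I| = 1$ via Proposition \ref{atcm}). Hence restricting $N(n)$ to structures satisfying any one of these three properties (or any conjunction of them) changes the count only by a factor tending to $1$, so the same asymptotic formula applies.

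The main obstacle lies in converting the \textquotedblleft almost all\textquotedblright{} statement of Corollary \ref{symint} into a genuine asymptotic equivalence of isomorphism class counts. This conversion rests on the combination of Proposition \ref{firasumm}(1) with Proposition \ref{rigid}, which together let us pass between labelled and unlabelled probabilities; these ingredients were already secured in the proof of Theorem \ref{aasi}, so no further work is needed here.
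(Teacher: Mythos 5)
Your argument is correct and is exactly the intended derivation: the paper states this corollary without a written proof (citing only Proposition~\ref{asymp} together with the preceding results), and the natural way to fill it in is precisely your chain $N(n)\sim F(n,n)\sim 2^{Q(n,n)}/P(n,n)$ with $P(n,n)=(n-1)!\cdot 0!\cdot 2^{0}=(n-1)!$, plus the sandwiching observation for the subclasses. Your closing remark about labelled versus unlabelled counts is a reasonable precaution, though since ``almost all'' is already defined via unlabelled (isomorphism-class) probabilities, the passage to $F(n,n)/N(n)\to 1$ is immediate.
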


Next, we aim to establish a \(0\)--\(1\) law for \(\sf{FAS}\). Based on Proposition~\ref{aasi}, it will be enough to establish a \(0\)--\(1\) law for \(\sf{FSIAS}\). We essentially follow the method outlined in the introduction of Bell and Burris in \cite{bellburr}. For the completeness proof required for this method, we will make use of a Fra{\"i}ss{\'e} limit. It will be convenient to work with the class \(\sf{FSIAS}_e\) rather than \(\sf{FSIAS}\), then translate the result, as \(\sf{FSIAS}\) does not have the HP. First, we show that a limit exists.

\begin{lemma}
\(\sf{FSIAS}_e\) has a Fra{\"i}ss{\'e} limit.
\end{lemma}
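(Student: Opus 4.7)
The plan is to verify that $\sf{FSIAS}_e$ satisfies the hypotheses of Fra{\"i}ss{\'e}'s Theorem (Proposition~\ref{fraisse}). The signature $\{f, e, T\}$ is finite and there are only countably many isomorphism types of finite $\{f, e, T\}$-structures, so only the HP, JEP, and AP need to be checked.

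The HP is immediate. The symmetry axiom $f(x) \approx x$ and the Peircean property (IP) are universal, and since $e$ is a constant it lies in every substructure, so the biconditional (II) is also preserved on restriction: for $a, b$ in a substructure $\bf{V}$ of some $\bf{U} \in \sf{FSIAS}_e$, the triple $(a, e, b)$ lies in $T^\bf{V} = T^\bf{U} \cap V^3$ iff it lies in $T^\bf{U}$, iff $a = b$.

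For JEP and AP I would use a single free-amalgamation construction. Given $\bf{A}, \bf{B}, \bf{C} \in \sf{FSIAS}_e$ with $\bf{A}$ a common substructure of $\bf{B}$ and $\bf{C}$ (for JEP, specialise to $\bf{A} = \inn{\{e\}; \id_{\{e\}}, e, \{(e, e, e)\}}$, which embeds into every member of $\sf{FSIAS}_e$ by virtue of $f(e) = e$ and Proposition~\ref{asprop}.2(b)), set $D \deq B \cup_A C$, $f_\bf{D} \deq \id_D$, $e_\bf{D} \deq e$, and define $T_\bf{D}$ by declaring consistency cycle-by-cycle: cycles contained in $B^3$ (respectively $C^3$) inherit their consistency from $\bf{B}$ (respectively $\bf{C}$) — these prescriptions agree on the overlap $A^3$; every mixed identity cycle $[a, e, b]$ with $a \in B \setminus A$ and $b \in C \setminus A$ is declared forbidden; every mixed diversity cycle is declared consistent (any uniform rule would do). Declaring consistency at the level of whole cycles yields (IP) by Proposition~\ref{asprop}.2(a), and (II) follows by case analysis — within $\bf{B}$ and $\bf{C}$ it holds by hypothesis, and across parts it holds by the explicit prohibition. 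The inclusions $\bf{B}, \bf{C} \hookrightarrow \bf{D}$ agree on $\bf{A}$, giving AP; the JEP case is the specialisation just described. Fra{\"i}ss{\'e}'s Theorem then yields the desired limit.

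The only point requiring any care is that $T_\bf{D}$ must simultaneously obey (IP) and (II): (IP) forces consistency decisions to be made at the level of whole cycles, while (II) forces the mixed identity cycles to be forbidden. Since (IP) imposes no further constraint and (II) is silent on diversity cycles, the two requirements are comfortably compatible, and this is the main (modest) obstacle in executing the plan.
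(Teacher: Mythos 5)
Your proposal is correct and takes essentially the same approach as the paper: verify HP via universality, obtain JEP from AP together with the one-element structure, and prove AP by free amalgamation over the common substructure. The only (immaterial) difference is the treatment of mixed triples in the amalgam: the paper simply sets \(T^{\bf{U}} \deq T^{\bf{V}} \cup T^{\bf{W}}\), forbidding all mixed cycles, whereas you forbid only the mixed identity cycles and declare mixed diversity cycles consistent --- as you note, either uniform choice on diversity cycles works.
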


\begin{proof}
By Theorem \ref{fraisse}, it is enough to show that \(\sf{FSIAS}_e\) has the HP, JEP, and AP.

By definition, \(\sf{FSIAS}_e\) is the class of finite members of a universal class. Based on this,  \(\sf{FSIAS}_e\) is closed under forming substructures, so \(\sf{FSIAS}_e\) clearly has the HP.

For the AP, let \(\bf{S},\bf{V},\bf{W} \in \sf{FSIAS}_e\) and let \(\mu \colon \bf{S} \to \bf{V}\) and \(\nu \colon \bf{S} \to \bf{W}\) be embeddings. Without loss of generality, we can assume that \(V \cap W = S\) and \(\mu\) and \(\nu\) are inclusion maps. Thus, we can define \(\bf{U} \deq \inn{U; f^\bf{U},e, T^\bf{U}}\), where  \(U \deq V \cup W\),  \(f^\bf{U}\) is given by
\[
f^\bf{U}(x) = \begin{cases}  f^\bf{V}(x) & \rm{if } x \in V \\ f^\bf{W}(x) & \rm{if } x \in W, \end{cases}
\]
and \(T^\bf{U} \deq T^\bf{V} \cup T^\bf{W}\).  Let \(a,b,c \in U\) and assume that \((a,b,c) \in T^\bf{U}\). By construction, we~have (\(a,b,c \in V\) and \((a,b,c) \in T^\bf{V}\)) or (\(a,b,c \in W\) and \((a,b,c) \in T^\bf{W}\)). In the first case, 

\noindent \((f^\bf{U}(a),c,b), (c,f^\bf{U}(b),a) \in T^\bf{U}\), since \(T^\bf{V} \subseteq T^\bf{U}\), \(f^\bf{U}{\upharpoonright}_V = f^\bf{V}\), and \(\bf{V}\) satisfies (IP). Similarly, we have \((f^\bf{U}(a),c,b),(c,f^\bf{U}(b),a) \in T^\bf{U}\) in the second case, so \(\bf{U}\) satisfies (IP). Let \(a \in U\). If \(a \in V\), then we have \((a,e,a) \in T^\bf{U}\), since \(T^\bf{V} \subseteq T^\bf{U}\) and \(\bf{V}\) satisfies (II). Similarly, \((a,e,a) \in T^\bf{U}\) when \(a \in W\). Since \(U = V \cup W\), it follows that \((a,e,a) \in T^\bf{U}\) in every case. Lastly, let \(a,b \in U\) such that \((a,e,b) \in T^\bf{U}\). By construction, (\(a,b \in V\) and \((a,e,b) \in T^\bf{V}\)) or (\(a,b \in W\) and \((a,e,b) \in T^\bf{W}\)). Since \(\bf{V}\) and \(\bf{W}\) satisfy (II), we have \(a = b\), so (II) holds. Since \(\bf{V}\) and \(\bf{W}\) both satisfy \(f(x) \approx x\), it follows that \(f^\bf{V}\) and \(f^\bf{W}\) are both identity maps. By construction, \(f^\bf{U}\) is an identity map, so \(\bf{U} \models f(x) \approx x\). By definition, \(U = V  \cup W\), hence \(|U| \le |V|+|W|\). Thus, \(U\) is finite. Based the above results, we have \(\bf{U} \in \sf{FSIAS}_e\). Clearly, the inclusion maps \(\imath_V \colon V \to U\) and \(\imath_W \colon W \to U\) are embeddings and \(\imath_V \circ \mu =  \imath_W \circ \nu\). Combining these results, we find that \(\sf{FSIAS}_e\) has the AP, which is what we wanted to show.

Clearly, \(\sf{FSIAS}_e\) contains a trivial structure. This structure embeds into all \(\bf{A} \in \sf{FSIAS}_e\), so the JEP follows from the AP. Thus, \(\sf{FSIAS}_e\) has the HP, JEP and AP, as required. \end{proof}

\begin{definition}[\(\bf{L}_\sf{SI}\), \(T_\sf{SI}\), and \(S_\sf{SI}\)]
Let \(\bf{L}_\sf{SI}\) be a Fra{\"i}ss{\'e} limit of \(\sf{FSIAS}_e\),  let \(T_\sf{SI}\) be the first-order theory of \(\bf{L}_\sf{SI}\), and let \(S_\sf{SI}\) be the almost sure theory of \(\sf{FSIAS}_e\).
\end{definition}

The elements of \(\sf{FSIAS}_e\) are symmetric, so generated substructures contain at most one extra element, namely \(e\). So, by Proposition \ref{qeomega}, we have the following.

\begin{corollary}
\(T_\sf{SI}\) is \(\aleph_0\)-categorical and has quantifier elimination.
\end{corollary}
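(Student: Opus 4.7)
The plan is to apply Proposition \ref{qeomega} directly to $\sf{FSIAS}_e$, with $\bf{L}_\sf{SI}$ as the Fra{\"i}ss{\'e} limit. Four hypotheses need to be checked: the signature is finite, the class has the HP, JEP, and AP, it is uniformly locally finite, and there are at most countably many isomorphism types of finitely generated members.

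First, the signature $\{f,e,T\}$ is finite by inspection, and the HP, JEP, and AP were just established in the preceding lemma. So the substantive step is uniform local finiteness, which is precisely the observation already flagged in the paper. Since every member of $\sf{FSIAS}_e$ satisfies $f(x) \approx x$, the unary operation $f$ never introduces new elements, and the only constant is $e$. Consequently, for any $\bf{U} \in \sf{FSIAS}_e$ and any $S \subseteq U$, the substructure of $\bf{U}$ generated by $S$ is $S \cup \{e\}$, which has cardinality at most $|S| + 1$. Taking the witness function $g \colon \bb{N} \to \bb{N}$ given by $g(n) \deq n+1$ establishes uniform local finiteness.

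From uniform local finiteness together with the finiteness of the signature, the countable bound on isomorphism types of finitely generated structures is immediate: each finitely generated structure has size bounded by some $g(n)$, and over a finite signature there are only finitely many isomorphism types of structures of each finite cardinality, giving a countable total. Hence all hypotheses of Proposition \ref{qeomega} are satisfied, so the first-order theory of $\bf{L}_\sf{SI}$, namely $T_\sf{SI}$, is $\aleph_0$-categorical and admits quantifier elimination, as required.

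I do not anticipate a genuine obstacle here: once one notices that symmetry forces $f$ to act trivially and that $e$ is the only constant, uniform local finiteness drops out, and the corollary is essentially a bookkeeping application of the Fra{\"i}ss{\'e}-theoretic proposition already recorded.
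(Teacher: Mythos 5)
Your proposal is correct and follows the paper's own route exactly: the paper's justification is precisely the remark that symmetry forces generated substructures to be $S \cup \{e\}$, yielding uniform local finiteness, after which Proposition~\ref{qeomega} applies. You have simply spelled out the bookkeeping (the bound $g(n) = n+1$ and the countability of isomorphism types over a finite signature) more explicitly than the paper does.
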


Next we introduce what Bell and Burris call extention axioms in \cite{bellburr}. The sentences essentially assert that a substructure can be extended by a single point in all possible ways. Here \(\neg^0\) and \(\neg^1\) mean no symbol and \(\neg\), respectively.

\begin{definition}
Let \(A_\sf{SI}\) be the set of first-order sentences of the form
\[
\forall x_1,\dots, x_m\colon \bigcurlywedge_{i=1}^m x_i \not\approx e  \to \exists y \colon y \not\approx e \curlywedge \Bigg( \bigcurlywedge_{i=1}^m y \not\approx x_i \Bigg)\curlywedge \neg^c T(y,y,y) \curlywedge{} 
\]
\[ \Bigg( \bigcurlywedge_{i = 1}^{m} \neg^{c_i} T(x_i,y,y) \Bigg) \curlywedge \Bigg( \bigcurlywedge_{1 \le i \le j \le m} \neg^{c_{ij}} T(x_i, x_j, y) \Bigg),
\]
where \(m \in \omega\) and \(c, c_i, c_{ij} \in \{0,1\}\), for all \(1 \le i \le j \le m\).
\end{definition}

\begin{lemma}\label{axfraisse}
Let \(\bf{L}\) be countable model of \textup{(II)}, \textup{(IP)}, and \(f(x) \approx x\). Then \(\bf{L} \cong \bf{L}_\sf{SI}\) if and only if \(\bf{L} \models A_\sf{SI}\).
\end{lemma}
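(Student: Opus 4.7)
The plan is to apply the standard extension-axiom characterization of Fra{\"i}ss{\'e} limits. The key preliminary observation, driving both directions, is that by Proposition~\ref{asprop}.2 every cycle in a \(\{f,e,T\}\)-structure satisfying \(f(x)\approx x\) comprises the six permutations of a triple and is either fully consistent or fully forbidden. Consequently, a one-point extension \(\bf{B}\) of a finite \(\bf{A}\in\sf{FSIAS}_e\) with carrier \(\{x_1,\dots,x_m,e\}\) by a new element \(y\neq e\)---where \(f^\bf{B}(y)=y\) is forced by \(f(x)\approx x\) and the identity cycle \([y,e,y]\) is forced to be consistent by (II)---is completely determined by choosing, for each of the diversity cycles \([y,y,y]\), \([x_i,y,y]\) (\(1\le i\le m\)), and \([x_i,x_j,y]\) (\(1\le i\le j\le m\)), whether it is consistent or forbidden; and every such choice yields a valid \(\bf{B}\in\sf{FSIAS}_e\). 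In other words, the sentences in \(A_\sf{SI}\) parametrize exactly the one-point extensions available in \(\sf{FSIAS}_e\).

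For the forward direction, I fix an instance of \(A_\sf{SI}\), interpret the universally quantified \(x_1,\dots,x_m\) in \(\bf{L}_\sf{SI}\), and let \(\bf{A}\) be the substructure they generate (which simply adjoins \(e\)). I then build \(\bf{B}\supseteq\bf{A}\) by adjoining a fresh element \(y\) with the cycle pattern dictated by the bits \(c, c_i, c_{ij}\); by the preliminary observation, \(\bf{B}\in\sf{FSIAS}_e\). Since \(\bf{L}_\sf{SI}\) is weakly homogeneous by Proposition~\ref{homogequiv}, the inclusion \(\bf{A}\hookrightarrow\bf{L}_\sf{SI}\) extends to an embedding \(\bf{B}\hookrightarrow\bf{L}_\sf{SI}\), and the image of \(y\) is the required witness.

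For the backward direction, assume \(\bf{L}\) is a countable model of (II), (IP), \(f(x)\approx x\), and \(A_\sf{SI}\). By the uniqueness clause of Fra{\"i}ss{\'e}'s Theorem (Proposition~\ref{fraisse}), it suffices to show that the age of \(\bf{L}\) is \(\sf{FSIAS}_e\) and that \(\bf{L}\) is ultrahomogeneous. Every finitely generated substructure of \(\bf{L}\) lies in \(\sf{FSIAS}_e\) since the defining conditions are universal; conversely, a straightforward induction on \(|A|\)---at each step applying an axiom from \(A_\sf{SI}\) with the bits matched to the cycle pattern of the next non-identity element of \(\bf{A}\)---embeds any \(\bf{A}\in\sf{FSIAS}_e\) into \(\bf{L}\). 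For ultrahomogeneity, it is enough (again by Proposition~\ref{homogequiv}) to verify weak homogeneity: given \(\bf{B}\le\bf{C}\) finitely generated in \(\bf{L}\) and an embedding \(\mu\colon\bf{B}\to\bf{L}\), I extend \(\mu\) one element at a time, invoking an axiom of \(A_\sf{SI}\) on the images of the non-identity elements already placed with bits determined by the cycle structure of \(\bf{C}\).

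The main obstacle is the preliminary bookkeeping, which verifies that the cycles \([y,y,y]\), \([x_i,y,y]\), and \([x_i,x_j,y]\) are the only diversity cycles involving the new point \(y\) and that each can be independently chosen consistent or forbidden without spoiling (IP) or (II). Once this is pinned down, both directions of the equivalence reduce to routine inductive/back-and-forth arguments driven by weak homogeneity.
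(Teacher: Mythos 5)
Your proposal is correct and follows essentially the same route as the paper: the same preliminary observation that the sentences of \(A_\sf{SI}\) parametrize exactly the one-point extensions in \(\sf{FSIAS}_e\), the forward direction via the age plus (ultra/weak) homogeneity of the limit, and the backward direction by building the age inductively from the extension axioms and verifying weak homogeneity one element at a time. The only cosmetic difference is that the paper phrases the age argument as a minimal counterexample and invokes ultrahomogeneity rather than weak homogeneity in the forward direction, which changes nothing of substance.
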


\begin{proof}
For the forward direction, assume that \(\bf{L} \cong \bf{L}_\sf{SI}\). Then \(\bf{L}\) is a Fra{\"i}ss{\'e} limit of \(\sf{FSIAS}_e\), hence the age of \(\bf{L}\) is \(\sf{FSIAS}_e\) and \(\bf{L}\) is ultrahomogeneous.  Let \(n \in \omega\), let \(c,c_i, c_{ij} \in \{0,1\}\), for all \(1 \le i \le j \le n\), and let \(u_1,\dots, u_n \in L\setminus \{e^\bf{L}\}\). Let \(\bf{U}\) denote the substructure of \(\bf{L}\) generated by \(U \deq \{u_1,\dots,u_n\}\). Fix some element \(v \notin U\) and define \(\bf{V} \deq \inn{V; f^\bf{V}, e^\bf{V}, T^\bf{V}}\), where \(V \deq U \cup \{e^\bf{F},v\}\), \(f^\bf{V} = \id_V\), \(e^\bf{V} = e^\bf{L}\), and \(T^\bf{V}\) is given by
\[
 \begin{cases} T^\bf{U} \cup [v,e^\bf{L},v]\cup [v,v,v]\cup \Big(\bigcup \{ [u_i,v,v] \mid c_i = 0\} \Big)\cup \Big(\bigcup\{ [u_i, u_j, v] \mid c_{ij} = 0\}\Big) & \rm{if } c = 0 \\
 T^\bf{U} \cup [v,e^\bf{L},v] \cup \Big(\bigcup \{ [u_i,v,v] \mid c_i = 0\} \Big)\cup \Big(\bigcup\{ [u_i, u_j, v] \mid c_{ij} = 0\}\Big) & \rm{if } c = 1.
\end{cases}
\]
Since the age of \(\bf{L}\) is \(\sf{FSIAS}_e\), we must have \(\bf{U} \in \sf{FSIAS}_e\), so it is easy to see that \(\bf{V} \in \sf{FSIAS}_e\). Again, since the age \(\bf{L}\) is \(\sf{FSIAS}_e\), there is a substructure \(\bf{W}\) of \(\bf{L}\) that is isomorphic~to~\(\bf{V}\). Let~\(\mu \colon \bf{V} \to \bf{W}\) be an isomorphism. Then \(\mu \circ\imath_U \) is an isomorphism from \(\bf{U}\) to the substructure of \(\bf{L}\) generated by \(\mu[U]\). As \(\bf{L}\) is ultrahomogeneous, \(\mu \circ\imath_U \) extends to an automorphism, say \(\nu\). Then, by construction, \(\nu^{-1}(\mu(v))\) is the witness to the sentence from \(A_\sf{SI}\) given by \(n\), \(c\), and each \(c_i\), and \(c_{ij}\), when choosing  \(x_i = u_i\), for each \(1 \le i \le n\). Thus, \(\bf{L} \models A_\sf{SI}\).

Conversely, assume that \(\bf{L} \models A_\sf{SI}\). To show that \(\bf{L} \cong \bf{L}_\sf{SI}\), we need to show that \(\sf{FSIAS}_e\) is the age of \(\bf{L}\) and that \(\bf{L}\) is ultrahomogeneous. As \(\bf{L}\) is a symmetric model of (IP) and (II), the age of \(\bf{L}\) is a subset of \(\sf{FSIAS}_e\). Assume, for a contradiction, that this inclusion is proper. Let \(\bf{U}\) be an element of minimal size in \(\sf{FSIAS}\) that is not in the age of \(\bf{L}\). Clearly, \(|U| > 1\). Now, let \(u \in U \setminus \{e^\bf{U}\}\) and let \(\bf{V}\) denote the substructure of \(\bf{U}\) generated by \( V \deq U \setminus \{u\}\). By our minimality assumption, \(\bf{V}\) embeds into \(\bf{L}\). Let \(\mu \colon \bf{V} \to \bf{L}\) be such an embedding. Let \(m \deq |V|-1\), let \(\{v_1,\dots,v_m\}\) be an enumeration of \(V\setminus \{e^\bf{V}\}\), let
\[
c \deq \begin{cases} 0 & \rm{if } [u,u,u] \subseteq T^\bf{U}\\ 1 & \rm{if } [u,u,u] \nsubseteq T^\bf{U}, \end{cases}
\]
let
\[
c_i \deq \begin{cases} 0 & \rm{if } [v_i,u,u] \subseteq T^\bf{U} \\ 1 & \rm{if } [v_i,u,u] \nsubseteq T^\bf{U}, \end{cases}
\]
for all \(1 \le i \le m\), and let
\[
c_{ij} \deq \begin{cases} 0 & \rm{if } [v_i,v_j,u] \subseteq T^\bf{U} \\ 1 & \rm{if } [v_i,v_j,u] \nsubseteq T^\bf{U}, \end{cases}
\]
for all \(1 \le i \le j \le m\). As \(\bf{L} \models A_\sf{SI}\),  \(\bf{L}\) satisfies the sentence defined by \(c\) and each \(c_i\) and \(c_{ij}\), so there is a witness, say \(y\), for the choice of \(x_i \deq \mu(v_i)\), for all \(1 \le i \le m\). By construction, the substructure of \(\bf{L}\) generated by \(\mu[V] \cup \{y\}\) is isomorphic to \(\bf{U}\). Thus, \(\bf{U}\) embeds into \(\bf{L}\), so \(\bf{U}\) is in the age of \(\bf{L}\), contradicting our assumption.

Lastly, based on Lemma \ref{homogequiv}, it will be enough to establish that \(\bf{L}\) is weakly homogeneous.  Let~\(\bf{U} \le \bf{V}\) be finitely generated substructures of \(\bf{L}\) and let \(\mu \colon \bf{U} \to \bf{L}\) be an embedding. Note that since \(\bf{L}\) is symmetric, \(e\) is the only new element that can be generated by a subset.  If \(\bf{U} = \bf{V}\), then we are done. Assume that \(\bf{U} \neq \bf{V}\) and fix some \(v \in V \setminus U\). Let \(m \deq |U|-1\), let \(\{u_1,\dots,u_m\}\) be an enumeration of \(U\setminus \{e^\bf{U}\}\), let

\[
c \deq \begin{cases} 0 & \rm{if } [v,v,v] \subseteq T^\bf{U}\\ 1 & \rm{if } [v,v,v] \nsubseteq T^\bf{U}, \end{cases}
\]
let
\[
c_i \deq \begin{cases} 0 & \rm{if } [u_i,v,v] \subseteq T^\bf{U} \\ 1 & \rm{if } [u_i,v,v] \nsubseteq T^\bf{U}, \end{cases}
\]
for all \(1 \le i \le m\), and let
\[
c_{ij} \deq \begin{cases} 0 & \rm{if } [u_i,u_j,v] \subseteq T^\bf{U} \\ 1 & \rm{if } [u_i,u_j,v] \nsubseteq T^\bf{U}, \end{cases}
\]
for all \(1 \le i \le j \le m\).  As \(\bf{L} \models A_\sf{SI}\),  \(\bf{L}\) satisfies the sentence defined by \(c\) and each \(c_i\) and \(c_{ij}\), so there is a witness, say \(y\), for the choice of \(x_i = \mu(u_i)\), for all \(1 \le i \le m\). By construction, the map \(\nu \colon U \cup \{v\} \to L\) given by
\[
\nu(x) = \begin{cases} \mu(x) & \rm{if } x \in U \\ y & \rm{if } x = v \end{cases}
\]
embeds the substructure of \(\bf{V}\) generated by \(U \cup \{v\}\) into \(\bf{L}\). By assumption, \(\bf{V}\) is finite, hence \(\mu\) can be extended to an embedding \(\nu \colon \bf{V} \to \bf{L}\) by repeating this construction. Thus, \(\bf{L}\) is weakly homogeneous, which is what we wanted to show.
\end{proof}

Based on Theorem \ref{fraisse}, we have the following.

\begin{corollary}
Together, \textup{(IP)}, \textup{(II)}, \(f(x) \approx x\), and \(A_\sf{SI}\) form a \(\aleph_0\)-categorical and therefore complete theory.
\end{corollary}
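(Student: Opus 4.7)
My plan is to use Lemma \ref{axfraisse} to obtain \(\aleph_0\)-categoricity directly, and then deduce completeness via the standard Vaught test once we verify that the theory has no finite models. Write \(T\) for the theory axiomatised by \textup{(IP)}, \textup{(II)}, \(f(x) \approx x\), and \(A_\sf{SI}\).

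For \(\aleph_0\)-categoricity I would first note that \(T\) is consistent because \(\bf{L}_\sf{SI} \models T\): the axioms \textup{(IP)}, \textup{(II)}, and \(f(x) \approx x\) are universal and hold in every member of the age \(\sf{FSIAS}_e\), hence in \(\bf{L}_\sf{SI}\), while \(\bf{L}_\sf{SI} \models A_\sf{SI}\) is exactly the forward direction of Lemma \ref{axfraisse}. The reverse direction of that same lemma then says that every countable model of \(T\) is isomorphic to \(\bf{L}_\sf{SI}\), so any two countable models of \(T\) are isomorphic.

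For the absence of finite models I would iterate the extension scheme \(A_\sf{SI}\). The \(m = 0\) instance (with \(c = 1\)) produces some \(y_1 \neq e\). Given distinct elements \(y_1, \dots, y_m \in L \setminus \{e\}\) already produced, the \(A_\sf{SI}\) instance obtained by setting \(x_i \deq y_i\) and every cycle-indicator \(c,c_i,c_{ij}\) equal to \(1\) (so that only the disequality conjuncts carry content) yields a fresh \(y_{m+1} \notin \{e, y_1, \dots, y_m\}\). Hence every model of \(T\) is infinite, and the Vaught test now delivers completeness.

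I do not expect any real obstacle: the content is entirely supplied by Lemma \ref{axfraisse}, and the only point needing a little care is the observation that \(A_\sf{SI}\) forces models to be infinite, a step that the statement of the corollary passes over in silence.
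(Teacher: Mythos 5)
Your proposal is correct and follows essentially the same route as the paper, which derives the corollary directly from Lemma \ref{axfraisse} together with Fra{\"i}ss{\'e}'s Theorem: the reverse direction of that lemma pins down every countable model as \(\bf{L}_\sf{SI}\), and completeness then comes from the {\L}o{\'s}--Vaught test. Your explicit verification that \(A_\sf{SI}\) rules out finite models (by iterating the extension axioms with all indicators set to \(1\)) is a detail the paper leaves implicit, and it is carried out correctly.
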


\begin{lemma}
\(A_\sf{SI} \subseteq S_\sf{SI}\).
\end{lemma}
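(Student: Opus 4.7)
Fix an arbitrary extension axiom $\varphi \in A_\sf{SI}$ determined by parameters $m$, $c$, $(c_i)_{1 \le i \le m}$, $(c_{ij})_{1 \le i \le j \le m}$, and set $k \deq 1 + m + \binom{m+1}{2}$. The first move is to reparametrize the uniform labelled structure on $\{1,\dots,n\}$: by Proposition \ref{asprop} and Proposition \ref{cyclecount}, once we fix the interpretation of the constant $e$ (for which there are $n$ choices), a labelled $\sf{FSIAS}_e$-structure on $\{1,\dots,n\}$ is uniquely specified by independently flagging each of the $Q(n,n)$ diversity cycles as consistent or forbidden. So sampling uniformly at random from $\sf{FSIAS}_e$-structures on $\{1,\dots,n\}$ amounts to sampling $e$ uniformly and, independently, flipping an unbiased coin for each diversity cycle.

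Now fix a tuple $(x_1,\dots,x_m)$ of pairwise distinct elements of $\{1,\dots,n\}\setminus\{e\}$ and a candidate witness $y \in \{1,\dots,n\}\setminus\{e,x_1,\dots,x_m\}$. The key observation is that the relevant cycles $[y,y,y]$, $[x_i,y,y]$ for $1\le i \le m$, and $[x_i,x_j,y]$ for $1\le i \le j \le m$ are $k$ pairwise distinct diversity cycles: since $f = \id$, each cycle is a multiset of three elements, and the multisets $\{y,y,y\}$, $\{x_i,y,y\}$, $\{x_i,x_j,y\}$ are pairwise distinct because the $x_i$ are distinct from each other and from $y$. Hence the event that $y$ witnesses the axiom for this tuple is determined by the consistency status of these $k$ independent cycles, and so has probability exactly $2^{-k}$. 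Moreover, if $y \neq y'$ are two potential witnesses, then every relevant cycle for $y$ contains $y$ in its multiset and every relevant cycle for $y'$ contains $y'$, so the two collections of cycles are disjoint, and the events ``$y$ works'' and ``$y'$ works'' are independent.

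Independence over the $n-m-1$ candidates gives that the probability that no $y$ witnesses the axiom for the fixed tuple is $(1-2^{-k})^{n-m-1}$. A union bound over at most $(n-1)^m$ choices of tuple $(x_1,\dots,x_m)$ (and noting that the bound does not depend on the interpretation of $e$) yields
\[
1 - \prl(\varphi,\sf{FSIAS}_e) \le (n-1)^m (1-2^{-k})^{n-m-1},
\]
which tends to $0$ since $k$ depends only on $m$ and $1 - 2^{-k} < 1$. Therefore $\prl(\varphi,\sf{FSIAS}_e) = 1$, so $\varphi \in S_\sf{SI}$, and since $\varphi$ was arbitrary we conclude $A_\sf{SI} \subseteq S_\sf{SI}$. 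The only genuine obstacle is the book-keeping in the previous paragraph, namely verifying that the $k$ cycles relevant to a given $y$ really are pairwise distinct (and disjoint from those for a different $y'$), which is exactly what makes the ``$2^{-k}$'' bound work and what legitimates treating the candidate witnesses as independent Bernoulli trials.
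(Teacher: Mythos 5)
Your proof is correct and takes essentially the same approach as the paper's: both reduce to independent fair coin flips on diversity cycles, bound the failure probability for a fixed tuple and candidate witness by $1-2^{-(m^2+3m+2)/2}$, use independence over the $n-m-1$ candidate witnesses, and finish with a union bound over the $(n-1)^m$ tuples. Your write-up is slightly more explicit than the paper's about why the $k$ relevant cycles are pairwise distinct and why distinct candidates $y$ give independent events, but the argument is the same.
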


\begin{proof}
Let \(n \in \bb{N}\),  let \(m \in \omega\), let \(c, c_i, c_{ij} \in \{0,1\}\), for all \(1 \le i \le j \le m\), and let \(\sigma\) be the sentence from \(A_\sf{SI}\) defined by these parameters. Clearly, given non-identity elements \(x_1,\dots,x_m, y \in \{1,\dots,n\}\) such that \(y \neq x_i\), for all \(1 \le i \le n\), at most 
\[
1+m+\frac{m^2+m}{2} = \frac{m^2+3m+2}{2}
\]
cycles are forced to be consistent for \(\sigma\) to be satisfied for these given choices. Thus, the fraction of structures failing the sentence with these choices is below \(1-2^{-(m^2+3m+2)/2}\). There are \((n-1)^m\) ways to select \(x_1,\dots,x_m\), \(n-m-1\) ways to select \(y\) given \(x_1,\dots,x_m\), and the choices of \(y\) are independent once \(x_1,\dots,x_n\) are selected. Based on these results, the fraction of structures not modelling \(\sigma\) is bounded above by
\[
(n-1)^m (1-2^{-(m^2+3m+2)/2})^{n-m-1}.
\]
Clearly, \((n-1)^m \le n^{m} = 2^{m\log_2(n)}\) and \(n-m -1 < n\), so this quantity is below
\[
2^{m \log_2(n) + n\log_2(1-2^{-(m^2+3m+2)/2}) }.
\]
Since \(1-2^{-(m^2+3m+2)/1} < 1\), we have \(\log_2(1 -2^{-(m^2+3m+2)/2}) < 0\), hence
\[
\lim_{n \to \infty} 2^{m \log_2(n) + n\log_2(1-2^{-(m^2+3m+2)/2}) } = 0.
\]
By the Squeeze Principle, the fraction of structures not modelling \(\sigma\) tends to 0. Thus, \(A_\sf{SI} \subseteq S_\sf{SI}\), as claimed.
\end{proof}

So, based on Proposition \ref{fraisse} and Lemma \ref{axfraisse}, we have the following.

\begin{corollary}\label{01e}
\(S_\sf{SI}\) is \(\aleph_0\)-categorical, and therefore complete. Thus, \(\sf{FSIAS}_e\) has a \(0\)--\(1\) law.
\end{corollary}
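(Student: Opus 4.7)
The plan is to lift the $\aleph_0$-categoricity and completeness of the first-order theory $T^* \deq \{(\text{IP}), (\text{II}), f(x)\approx x\} \cup A_\sf{SI}$---already established in the corollary following Lemma~\ref{axfraisse}---to the almost sure theory $S_\sf{SI}$, and then read off the $0$--$1$ law as a consequence of completeness.

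First I would verify that $T^* \subseteq S_\sf{SI}$. The axioms (IP), (II), and $f(x)\approx x$ hold in every member of $\sf{FSIAS}_e$ by definition, so each trivially has labelled probability $1$, and $A_\sf{SI}\subseteq S_\sf{SI}$ is the content of the preceding lemma.

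Next I would argue that every first-order sentence has labelled probability $0$ or $1$. Given a sentence $\sigma$, completeness of $T^*$ gives $T^* \vdash \sigma$ or $T^* \vdash \neg\sigma$, and compactness in either case reduces the matter to finitely many axioms $\tau_1,\dots,\tau_k \in T^*$. Since each $\tau_i$ has labelled probability $1$ in $\sf{FSIAS}_e$, a finite union bound on the complementary events gives $\prl(\tau_1 \curlywedge \cdots \curlywedge \tau_k, \sf{FSIAS}_e) = 1$, and hence $\prl(\sigma, \sf{FSIAS}_e) \in \{0,1\}$ with the value determined by which horn of the disjunction applies. This is the desired $0$--$1$ law.

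The same compactness argument shows that $S_\sf{SI}$ is exactly the deductive closure of $T^*$: if $\sigma \in S_\sf{SI}$ were not a consequence of $T^*$, then $T^* \vdash \neg\sigma$ would put $\neg\sigma$ in $S_\sf{SI}$, contradicting $\prl(\sigma,\sf{FSIAS}_e) = 1$. Consequently $S_\sf{SI}$ inherits both $\aleph_0$-categoricity and completeness from $T^*$. The only non-routine ingredient is the Fra{\"i}ss{\'e}-style identification of $\bf{L}_\sf{SI}$ in Lemma~\ref{axfraisse} together with the probability lemma preceding this corollary; everything else is routine compactness, and there is no serious obstacle beyond correctly stringing these pieces together.
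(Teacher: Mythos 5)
Your proposal is correct and follows essentially the same route as the paper: the paper derives the corollary directly from the $\aleph_0$-categoricity and completeness of $(\textup{IP})$, $(\textup{II})$, $f(x)\approx x$, $A_\sf{SI}$ together with the lemma $A_\sf{SI} \subseteq S_\sf{SI}$, leaving the standard Compton/Bell--Burris compactness-and-union-bound step implicit, which is precisely what you spell out. No issues.
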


This result can be translated to a result for \(\sf{FSIAS}\).

\begin{corollary}
\(\sf{FSIAS}\) has a \(0\)--\(1\) law.
\end{corollary}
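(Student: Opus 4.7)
The plan is to reduce the statement to Corollary~\ref{01e} (the 0--1 law for \(\sf{FSIAS}_e\)) via a signature translation. A labelled member \(\bf{U}\) of \(\sf{FSIAS}\) on \(\{1,\dots,n\}\) consists of an involution \(f^\bf{U}\), a ternary relation \(T^\bf{U}\), and a singleton \(I^\bf{U} = \{i\}\) (since the structures are integral). To such a \(\bf{U}\) I would associate the labelled \(\{f,e,T\}\)-structure \(\bf{U}^\circ\) on the same universe defined by \(f^{\bf{U}^\circ} \deq f^\bf{U}\), \(T^{\bf{U}^\circ} \deq T^\bf{U}\), and \(e^{\bf{U}^\circ} \deq i\). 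Using \(|I^\bf{U}|=1\) to check that the identity axioms \textup{(I)} and \textup{(II)} match under this correspondence, and noting that \textup{(P)} and \textup{(IP)} are literally the same condition on \(f\) and \(T\), one verifies that \(\bf{U} \mapsto \bf{U}^\circ\) is a bijection from the labelled structures on \(\{1,\dots,n\}\) in \(\sf{FSIAS}\) onto those in \(\sf{FSIAS}_e\).

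Next, for any first-order \(\{f,T,I\}\)-sentence \(\sigma\), I would let \(\sigma^*\) be the \(\{f,e,T\}\)-sentence obtained by replacing each atomic subformula of the form \(I(t)\) by \(t \approx e\). A routine induction on formula complexity shows that \(\bf{U} \models \sigma\) if and only if \(\bf{U}^\circ \models \sigma^*\); the base case uses \(|I^\bf{U}|=1\) to translate the membership predicate, and the inductive steps are immediate since the bijection is the identity on the universe and preserves \(f\) and \(T\). Combined with the bijection above, this yields
\[
\prl(\sigma,\sf{FSIAS}) = \prl(\sigma^*,\sf{FSIAS}_e)
\]
whenever either side is defined.

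By Corollary~\ref{01e}, the right-hand side exists and lies in \(\{0,1\}\) for every such \(\sigma^*\); hence the same holds for every \(\sigma\), which is precisely the 0--1 law for \(\sf{FSIAS}\). I do not expect a serious obstacle: the argument is essentially a translation between signatures that are definitionally equivalent once integrality has been imposed. The only mild care needed is in checking the correspondence between \textup{(I)} and \textup{(II)} under \(|I|=1\) and confirming that \(\sigma \mapsto \sigma^*\) preserves satisfaction under the bijection, both of which are routine.
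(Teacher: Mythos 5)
Your proposal is correct and follows essentially the same route as the paper: both translate between $\sf{FSIAS}$ and $\sf{FSIAS}_e$ by identifying the unique element of $I$ with the constant $e$, replace atomic subformulas $I(t)$ by $t \approx e$, and then invoke Corollary~\ref{01e}. The only cosmetic difference is that you phrase the correspondence at the level of labelled structures while the paper cites the bijection on isomorphism classes from Proposition~\ref{atcm}; both suffice.
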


\begin{proof}
Let \(\bf{U} \in \sf{FSIAS}\),  let \(e^\bf{U}\) be the unique element of \(I\), let \(\bf{U}_e \deq \inn{U; f^\bf{U}, e^\bf{U}, T^\bf{U}}\), let \(\sigma\) be a \(\{f,T,I\}\)-sentence, and let \(\sigma_e\) be the \(\{f,e,T\}\)-sentence obtained from \(\varphi\) by replacing all occurences of \(I(x)\), for some  \(x\), with \(x \approx e\). By construction,  \(\bf{U} \models \varphi\) if and only if \(\bf{U}_e \models \varphi_e\). As there is a one-to-one correspondence between isomorphism classes in \(\sf{FSIAS}\) and \(\sf{FSIAS}_e\), this observation and Corollary~\ref{01e} tell us that \(\sf{FSIAS}\) has a \(0\)--\(1\) law, as required.
\end{proof}

Hence, by Theorem \ref{aasi}, we have the following.

\begin{corollary}\label{01}
\(\sf{FAS}\) has a \(0\)--\(1\) law.
\end{corollary}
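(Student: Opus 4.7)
The plan is to lift the 0-1 law for \(\sf{FSIAS}\) to one for \(\sf{FAS}\) via Theorem~\ref{aasi}, using a simple three-term split. Let \(\sigma\) be any first-order \(\{f,T,I\}\)-sentence. For each \(n \in \bb{N}\), let \(A_n\) denote the set of members of \(\sf{FAS}\) with universe \(\{1,\dots,n\}\), let \(B_n \deq A_n \cap \sf{FSIAS}\), and write \(A^\sigma_n\) and \(B^\sigma_n\) for the respective subsets satisfying \(\sigma\). I would then decompose
\[
\frac{|A^\sigma_n|}{|A_n|} \;=\; \frac{|B^\sigma_n|}{|B_n|}\cdot\frac{|B_n|}{|A_n|} \;+\; \frac{|A^\sigma_n \setminus B^\sigma_n|}{|A_n|}.
\]
By the previous corollary, the first factor of the first summand converges to some \(L \in \{0,1\}\); by the labelled form of Theorem~\ref{aasi} (which is what is actually established in its proof), the second factor converges to \(1\); and since \(A^\sigma_n \setminus B^\sigma_n \subseteq A_n \setminus B_n\), the final summand is bounded above by \(1 - |B_n|/|A_n|\), which tends to \(0\). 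This gives \(\prl(\sigma, \sf{FAS}) = L \in \{0,1\}\).

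To obtain the corresponding unlabelled statement I would invoke Proposition~\ref{rigid}. Theorem~\ref{aasi} shows that almost all labelled members of \(\sf{FAS}\) lie in \(\sf{FSIAS}\) and are therefore integral, and Proposition~\ref{firasumm}(1) shows that almost all integral labelled members of \(\sf{FAS}\) are rigid; combining these, almost all labelled members of \(\sf{FAS}\) are rigid. Proposition~\ref{rigid} then ensures that \(\pru(\sigma, \sf{FAS})\) exists and coincides with \(\prl(\sigma, \sf{FAS}) = L \in \{0,1\}\). Since \(\sigma\) was arbitrary, \(\sf{FAS}\) enjoys a 0-1 law.

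I do not foresee any genuine obstacle: all the substantive work has already been done in Theorem~\ref{aasi} and in the 0-1 law for \(\sf{FSIAS}\). The only minor care required is to track which probability (labelled or unlabelled) each input refers to, and to apply Proposition~\ref{rigid} once at the end to reconcile them.
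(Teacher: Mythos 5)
Your argument is correct and is essentially the paper's: the paper derives Corollary~\ref{01} in one line from Theorem~\ref{aasi} and the \(0\)--\(1\) law for \(\sf{FSIAS}\), and your three-term decomposition together with the rigidity transfer via Proposition~\ref{rigid} is exactly the routine verification that one-line derivation leaves implicit. The one point you rightly flag --- that Theorem~\ref{aasi}'s proof establishes the labelled statement first and that Proposition~\ref{firasumm}(1) plus integrality gives rigidity of almost all labelled members of \(\sf{FAS}\) --- is handled the same way in the proof of Theorem~\ref{aasi} itself, so nothing further is needed.
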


\section{Further work}

Perhaps the most obvious open problem in this area is problem of determining whether almost all nonassociative relation algebras are representable; this problem is mentioned by Maddux in \cite{perspecRA} and by Hirsch and Hodkinson in \cite{rabg}. A possible first step to solving this problem could be solving the corresponding problems for the classes of feebly and qualitatively representable algebras introduced by Hirsch, Jackson, and Kowalski in \cite{quali}.

\begin{problem}
Determine whether almost all nonassociative relation algebras are feebly, qualitatively, or (strongly) representable.
\end{problem}
 
Determining whether or not Corollary~\ref{01} extends to the classes of nonassociative relation algebras and relation algebras would also be an interesting problem.

\begin{problem}
Determine whether \(\sf{NA}\) and \(\sf{RA}\) have \(0\)--\(1\) laws.
\end{problem}

\section{Acknowledgements}

This is a pre-print of a paper contributed to the Journal of Symbolic Logic. The final authenticated version is available at: \url{https://doi.org/10.1017/jsl.2021.81}. The author would like to thank Roger Maddux for suggesting the result from Corollary~\ref{symint}.

\end{document}